\font\msbm=msbm10
\font\msbmsmall=msbm10 scaled 800
\font\msbmtiny=msbm10 scaled 700
\def\PI{P$_{\mbox{\rm\tiny I}}$}
\def\PII{P$_{\mbox{\rm\tiny II}}$}
\def\PV{P$_{\mbox{\rm\tiny V}}$}
\def\PVI{P$_{\mbox{\rm\tiny VI}}$}
\def\PItwo{P$_{\mbox{\rm\tiny I}}^2$}
\def\PItwotwo{P$_{\mbox{\rm\tiny I}}^{(2,2)}$}
\def\PItwone{P$_{\mbox{\rm\tiny I}}^{(2,1)}$}
\def\PItwozero{P$_{\mbox{\rm\tiny I}}^{(2,0)}$}
\def\LItwotwo{L$_{\mbox{\rm\tiny I}}^{(2,2)}$}
\def\LItwone{L$_{\mbox{\rm\tiny I}}^{(2,1)}$}
\def\LItwozero{L$_{\mbox{\rm\tiny I}}^{(2,0)}$}
\def\Re{\mbox{\rm Re}\,}
\def\Im{\mbox{\rm Im}\,}
\def\Ai{\mbox{\rm Ai}\,}
\def\sgn{\mbox{\,\rm sgn}}
\def\res\mathop{\mbox{\,\rm res}}
\def\varkappa{\mbox{\msbm\char'173}}      
\def\res{\mathop{\hbox{res}}}
\theoremstyle{plain}
\newtheorem{thm}{Theorem}[section]
\newtheorem{prop}[thm]{Proposition}
\newtheorem{RHP}{Riemann--Hilbert problem}
\theoremstyle{definition}
\theoremstyle{remark}
\newtheorem{rem}{Remark}[section]
\newtheorem{conj}{Conjecture}[section]
\numberwithin{equation}{section}
\numberwithin{figure}{section}
\begin{document}
\thispagestyle{empty}

\title[On a non-Painlev\'e isomonodromy deformation equation]
{On an isomonodromy deformation equation without 
the Painlev\'e property}
\author{Boris Dubrovin}
\address{SISSA, Via Bonomea 265, 34136, Trieste, Italy, and Laboratory of Geometric Methods in Mathematical
Physics, Moscow State University `M.V.Lomonosov'}
\email{dubrovin@sissa.it}
\author{Andrei Kapaev}
\address{SISSA, Via Bonomea 265, 34136, Trieste, Italy}
\email{akapaev@sissa.it}

\begin{abstract}
We show that the fourth order nonlinear ODE which controls 
the pole dynamics in the general solution of equation \PItwo\
compatible with the KdV equation exhibits two remarkable properties:
1)~it governs the isomonodromy deformations of a $2\times2$ matrix 
linear ODE with polynomial coefficients, and 2)~it does not possesses 
the Painlev\'e property. We also study the properties of the 
Riemann--Hilbert problem associated to this ODE and find its large $t$
asymptotic solution for the physically interesting initial data.
\end{abstract}
\maketitle

\section{Introduction}

The study of relationships between the theory of isomonodromic deformations and the theory of differential equations satisfying the so-called Painlev\'e property is a well-established branch of the analytic theory of differential equations in complex domain (see below a brief summary of the most important results obtained in this direction). One of the outputs of the present paper suggests that the above mentioned relationship is less straightforward than it was traditionally believed. We illustrate the point with an example of a fourth order ODE for a function $a=a(t)$
\begin{equation}\label{PI21}
a''''
+120(a')^3a''
-120a'a''t
-\tfrac{200}{3}(a')^2
-\tfrac{40}{3}aa''
+\tfrac{200}{9}t=0.
\tag{\PItwone}
\end{equation}
This equation appeared in \cite{shimomura} in the study of pole loci of solutions to a degenerate Garnier system.
Our observation is that eq. \PItwone\ governs isomonodromic deformation of certain linear differential operator with polynomial coefficients  (see eq. \eqref{main1} below). However, this equation does not satisfy the Painlev\'e property as its general solution has third order branch points of the form
\begin{equation}\label{ord3}
a(t) =a_0 - (t-b)^{1/3} +{\mathcal O}\left( (t-b)^{5/3}\right), \quad b\in\mathbb C
\end{equation}
(cf. \cite{shimomura}) where the location of the branch points depends on the choice of the solution (the so-called \emph{movable critical singularities}).

In spite of such a somewhat surprising phenomenon the method of isomonodromic deformations proves to be almost as powerful in the study of solutions to the equation \ref{PI21} as in the case of classical Painlev\'e equations. Namely, it is possible to derive the large $t$ asymptotics of solutions and, moreover, to describe the branching locus of a given solution in terms of a kind of a spectral problem for a quintic anharmonic oscillator.

The equation \ref{PI21} is of interest on its own. Namely, it describes the behavior of poles of solutions to another fourth order ODE
\begin{equation}\label{pp12}
u_{xxxx}
+10u_x^2
+20uu_{xx}
+40
\bigl(
u^3
-6tu
+6x
\bigr)
=0
\end{equation}
usually denoted as \PItwo. It is the second member of the so-called
\PI\ hierarchy. The coefficients of this equation depend on $t$ as on a parameter. It is well known (see, e.g., \cite{moore}) that the equation \eqref{pp12} is compatible with the Korteweg - de Vries (KdV) dynamics
\begin{equation}\label{KKdV}
u_t+uu_x+\tfrac{1}{12}u_{xxx}=0.
\end{equation}
The Laurent expansion of solutions $u=u(x,t)$ to the system \eqref{pp12}, \eqref{KKdV} near a pole $x=a(t)$ has the form
\begin{equation}\label{laurent}
u=-\frac1{(x-a(t))^2} +{\mathcal O}\left( (x-a(t))^2\right)
\end{equation}
where the function $a(t)$ solves\footnote{The connection of eq. \PItwone \ with the KdV equation was not considered in \cite{shimomura}.} eq. \ref{PI21}. In this way one associates a multivalued solution $a(t)$ to eq. \ref{PI21} with any solution to eq. \PItwo. The branch points \eqref{ord3} of $a(t)$ correspond to the triple collisions of the poles of solutions to the \PItwo. Thus, the above mentioned isomonodromy realization of eq. \ref{PI21} provides one with a tool for studying the KdV dynamics of poles of solutions to the \PItwo ~equation.

Particular solutions to the \PItwo ~equation are of interest. We will concentrate our attention on one of them, namely, the one that has no poles on the real axis $x\in \mathbb R$ (see \cite{BMP}, \cite{D1}, \cite{D2}, \cite{CG} about importance of this special solution to the eq. \PItwo). Such a solution $u(x,t)$ exists \cite{CV} for any real $t$ and it is uniquely determined by its asymptotic behavior for large $|x|$. Using the developed techniques along with the isomonodromic description \cite{K} of the special solution to \PItwo, we arrive at the asymptotic description for large $t$ of poles of this special solution. 

\medskip

Before proceeding to the formulation of main results
let us first briefly recall the basic 
notions of the present paper, namely the Painlev\'e property and 
the isomonodromic deformations. In the description of the historical framework we will manly follow the paper \cite{Gray}.

\subsection{Painlev\'e property}
In 1866, L.~Fuchs \cite{LFuchs0} has shown that all the singular points 
of solutions to a linear ODE are among the singularities of its coefficients
and thus are independent from the initial conditions. In the nonlinear case,
the reasonable problem is to look for ODEs defining the families of functions, 
called the {\em general solutions}, which can be meromorphically extended to 
the universal covering space of a punctured Riemann surface with the punctures 
determined by the equation. In other words, the problem is to find ODEs whose
general solutions are free from the branch points and essential singularities
depending on the specific choice of the initial data. This property is called now 
the {\em Painlev\'e property}, or the {\em analytic Painlev\'e property}, and is 
obviously shared not only by the linear ODEs but also by the ODEs for 
elliptic functions.

In \cite{LFuchs}, L.~Fuchs started the classification of the first order polynomial 
in $u$ and $u'$ ODEs, $F(x,u,u')=0$, with single-valued in $x$ coefficients
with respect to the Painlev\'e property. H.~Poincar\'e \cite{Poincare} and 
P.~Painlev\'e \cite{Painleve} accomplished the analysis not finding new 
transcendental functions. 

In \cite{P}, P.~Painlev\'e revisited the L.~Fuchs' idea extending 
the program of classification to the second order ODEs of the form
$u''=F(x,u,u')$ with $F$ meromorphic in $x$ and rational in $u$ and 
$u'$. In the course of classification of the 2nd order 1st degree
ODEs modulo the M\"obius transformation \cite{Gam, I}, it was 
found 50 equations which pass the Painlev\'e $\alpha$-test and thus
called now the {\em Painlev\'e--Gambier equations}. It occurs that 
all these equations can be either integrated in terms of the classical 
linear transcendents, or elliptic functions, or reduce to one of the six 
exceptional {\em classical Painlev\'e equations}
\PI--\PVI. 

\subsection{Isomonodromic deformations}
The monodromy group to a linear ODE was first considered by B.~Riemann 
\cite{Rim}, H.~Schwarz \cite{Schwarz} and H.~Poincar\'e \cite{Poincare2}. 
Apparently, it was L.~Fuchs in \cite{LFuchs2} who for the first time set
the problem of the deformations of the coefficients in a linear equation that leave 
the monodromy group unchanged. Namely, assuming that solutions of 
a linear ODE depend on an additional variable, he obtained a system 
of the first order PDEs the solutions must satisfy.

More modern treatment of the isomonodromy deformations was developed by 
R.~Fuchs in \cite{F}. He has shown that the monodromy group of a scalar 
linear ODE with four Fuchsian singularities at $\lambda=0,1,\infty,x$ 
and an apparent singular point at $\lambda=u$ does not depend on the 
location $x$ of the fourth Fuchsian singular point if the location of 
the apparent singularity $u$ depends on $x$ according to a nonlinear 2nd 
order ODE. Later, the Fuchs' isomonodromic deformation equation was 
included as the sixth Painlev\'e equation \PVI\ into the list of the 
classical Painlev\'e equations.

In 1912, L.~Schlesinger \cite{Sch} generalized the Fuchs' approach
finding the equations of the isomonodromic deformations for arbitrary
linear Fuchsian ODEs, while R.~Garnier \cite{Gar} presented the scalar 
second order linear ODEs with irregular singular points whose 
isomonodromic deformations are governed by the lower classical 
Painlev\'e equations \PI--\PV. 
In 1980, M.~Jimbo, T.~Miwa and K.~Ueno 
\cite{JMU} extended the theory of Garnier to the linear ODEs with 
generic irregular singularities.

\subsection{Painlev\'e property of the isomonodromy deformation
equations}

It is interesting that to the date of the mentioned above achievements,
the fact that these equations do indeed possess the Painlev\'e property 
was not proved rigorously even for the case of the classical Painlev\'e
equations. Thus the term ``Painlev\'e equation'' loosely refers 
to various equations among which we mention the higher order ODEs in 
hierarchies associated with the classical Painlev\'e equations, 
the higher order ODEs in the classifications by F.~Bureau \cite{Bur} 
and C.~Cosgrove \cite{Cos} based on the use of some Painlev\'e tests, 
as well as the differential, difference, $q$-difference and 
elliptic-difference equations found in the course of the study of 
the symmetries and geometry of the classical Painlev\'e equations, 
see e.g.\ \cite{Noumi} and \cite{Sakai}.

A general elegant approach to the Painlev\'e property of the equations 
of the isomonodromic deformations was presented by T.~Miwa \cite{Miw} 
and B.~Malgrange \cite{Ma} in the early 80s of the last century
(recall that the direct proof of the Painlev\'e property of the classical
Painlev\'e equations appeared even later, see \cite{GLSh}).
The approach by T.~Miwa and B.~Malgrange is based on the use of the zero 
curvature representation and the Riemann--Hilbert correspondence. 
In fact, they have proved the analytic Painlev\'e property of 
the isomonodromic deformation equations for arbitrary linear Fuchsian 
ODEs and for equations with unbranched irregular singular points. 
In this respect we also mention the papers of M.~Inaba and M.~Saito 
\cite{IS} who developed an algebro-geometric approach to 
the {\em geometric Painlev\'e property} of the isomonodromic deformations
of the logarithmic and unramified irregular connections which also
implies their analytic Painlev\'e property.

\subsection{Plan of the paper and the main results}

One can expect that the isomonodromy deformation equations for arbitrary
rational connections possess the Painlev\'e property. However, the na\"ive 
induction does not work. Namely, and this is our first result, we show that
the polynomial in all  variables 4th order 1st degree ODE \eqref{PI21} 
governs the isomonodromic deformations of the following linear differential equation with polynomial coefficients
\begin{equation}\label{main1}
\frac{d\,Z}{d\lambda}=\left(\begin{array}{cc} -\frac3{20} a'' & \frac1{30} \lambda^4+\alpha_3 \lambda^3 +\alpha_2 \lambda^2 +\alpha_1\lambda +\alpha_0 \\
\\
\frac1{30} \lambda-\alpha_3 & \frac3{20} a''\end{array}\right)\, Z
\end{equation}
Here $a=a(t)$, $\alpha_k=\alpha_k(t)$, $k=0, \, 1, \, 2, \, 3$ are some smooth functions.

\begin{prop} The monodromy\footnote{As the system \eqref{main1} has only an irregular singularity at infinity, the isomonodromicity means independence from $t$ of the Stokes multipliers of the system - see below.} of the system \eqref{main1} does not depend on $t$ if and only if the coefficients have the form
\begin{eqnarray}
&&
\alpha_3=-\frac1{10} a'+\alpha_3^0, \quad \alpha_2=\frac3{10} {a'}^2 -6 \alpha_3^0 a' -t + \alpha_2^0
\nonumber\\
&&
\alpha_1=-\frac9{10} {a'}^3 +27 \alpha_3^0 {a'}^2 -3[\alpha_2^0 +60(\alpha_3^0)^2 -t] a' +a-40 \alpha_3^0 t +\alpha_1^0
\nonumber\\
&&
\alpha_0=-\frac9{20} a'''-\frac{54}5 {a'}^4 +432 \alpha_3^0 {a'}^3 + 18 \left[ t-330 (\alpha_3^0)^2  - \alpha_2^0 \right]{a'}^2
\nonumber\\
&&
+3\left[  a -130 \alpha_3^0 t +10800 (\alpha_3^0)^3 +90 \alpha_2^0 \alpha_3^0 +  \alpha_1^0\right]\, a' -30 \alpha_3^0\, a \nonumber\\
&&
+2100(\alpha_3^0)^2 t -54000 (\alpha_3^0)^4 -900\alpha_2^0(\alpha_3^0)^2 -30 \alpha_1^0 \alpha_3^0 
\nonumber
\end{eqnarray}
where $\alpha_1^0$, $\alpha_2^0$, $\alpha_3^0$ are some constants,
together with the following ODE for the function $a=a(t)$
\begin{eqnarray}\label{ode-general}
&&
a^{IV} +\left[120 {a'}^3 -3600 \alpha_3^0 {a'}^2 +120 \left( \alpha_2^0 +270 (\alpha_3^0)^2 -t\right) a' 
\right.
\nonumber\\
&&\left.
-\frac{40}3 \left( a-100 \alpha_3^0 t + 6300 (\alpha_3^0)^3 +60 \alpha_2^0 \alpha_3^0 +\alpha_1^0\right) \right] a'' -\frac{200}3 {a'}^2 +\frac{200}9 t
\nonumber\\
&&
+\frac{4000}3 \alpha_3^0 a' -6000 (\alpha_3^0)^2 -\frac{200}9 \alpha_2^0=0.
\end{eqnarray}
\end{prop}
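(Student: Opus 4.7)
The plan is to build an auxiliary linear system $\partial Z/\partial t = B(\lambda,t) Z$ and impose the zero-curvature (compatibility) condition
\[
\frac{\partial A}{\partial t} - \frac{\partial B}{\partial \lambda} + [A,B] = 0,
\]
where $A(\lambda,t)$ denotes the coefficient matrix of \eqref{main1}. Since $A$ is polynomial of degree $4$ in its $(1,2)$ entry, degree $1$ in its $(2,1)$ entry, and constant on the diagonal, a short degree count in the bracket $[A,B]$ shows that $B$ must be polynomial in $\lambda$ of degree at most $3$ for the equation to close. I would therefore write $B(\lambda,t) = \sum_{k=0}^{3} B_k(t)\,\lambda^k$ with $2\times 2$ coefficient matrices $B_k$ and expand the compatibility condition into powers of $\lambda$, obtaining a finite hierarchy of matrix equations.

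First, I would attack the highest powers of $\lambda$ (from $\lambda^{5}$ downward in the off-diagonal and from $\lambda^{4}$ in the diagonal). Since the $(2,1)$ entry of $A$ is $\frac{1}{30}\lambda - \alpha_3$ with a nonvanishing leading term, the commutator $[A,B]$ at each order is generically invertible for the unknowns, and the hierarchy is triangular: the top level fixes the leading entries of $B_3$ in terms of $a''$ and forces $\alpha_3 = -\tfrac{1}{10}a' + \mathrm{const}$; the next level solves for $B_2$ and expresses $\alpha_2$ in terms of $a', a''$ and a new constant; and so on. The three integration constants that emerge here will be identified with $\alpha_3^0,\alpha_2^0,\alpha_1^0$, and the explicit formulas stated in the proposition for $\alpha_3,\alpha_2,\alpha_1,\alpha_0$ appear as the solution of this recursion, with the term $-\tfrac{9}{20}a'''$ in $\alpha_0$ coming in exactly when solving the $\lambda$-order at which $\partial_t B_k$ first feeds back through $\partial A/\partial t$.

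Descending to the lowest powers of $\lambda$ (the remaining $\lambda^{1}$ and $\lambda^{0}$ equations), after substituting the expressions for $\alpha_0,\ldots,\alpha_3$ just obtained, all matrix entries but one must reduce to identities or to the requirement that $\alpha_3^0,\alpha_2^0,\alpha_1^0$ are indeed $t$-independent (which is automatic from the recursion). The single surviving scalar equation is the fourth-order ODE \eqref{ode-general} for $a(t)$; this yields the necessity direction. For sufficiency, once $B(\lambda,t)$ has been explicitly produced and the stated formulas and ODE for $a$ hold, the zero-curvature equation is satisfied by construction, so $(A,B)$ is a genuine Lax pair. The standard Jimbo--Miwa--Ueno argument \cite{JMU} then gives that the Stokes multipliers at the unique irregular singularity $\lambda=\infty$ of \eqref{main1} are preserved under the $t$-flow.

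The main obstacle is not conceptual but organizational: the coefficient $\alpha_0$ is a polynomial of total degree four in $(a', \alpha_3^0)$ and the ODE \eqref{ode-general} involves quartic and cubic nonlinearities, so the bookkeeping of the hierarchy at the middle orders of $\lambda$ is heavy and error-prone. The triangular structure of the recursion and the fact that $A$ has only one (irregular) singularity keep the computation tractable; I would carry it out symbolically order by order, checking at each stage that the entries of $B_k$ remain polynomial in $t$-derivatives of $a$ of the expected order, and arriving at \eqref{ode-general} as the single obstruction on the last line of the hierarchy.
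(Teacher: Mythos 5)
Your proposal is correct and follows essentially the same route as the paper: the paper establishes the isomonodromy representation by writing down the deformation matrix explicitly (in Section~3 it is ${\mathcal C}=[-\tfrac13\lambda^3+\beta_2\lambda^2+\beta_1\lambda+\beta_0]\sigma_+-\tfrac13\sigma_-$, i.e.\ exactly the degree-$3$ polynomial your degree count predicts) and verifying the zero-curvature condition ${\mathcal A}_t-{\mathcal C}_\lambda+[{\mathcal A},{\mathcal C}]=0$ order by order in $\lambda$, which yields the stated formulas for the $\alpha_k$ and the fourth-order ODE as the last surviving scalar equation. Your treatment of the necessity direction (that $Z_tZ^{-1}$ must be polynomial of degree $\le 3$) and the appeal to the Jimbo--Miwa--Ueno argument for sufficiency are the standard completions of the argument the paper leaves implicit.
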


After a change
$$
t\mapsto t+\alpha_2^0 -30 (\alpha_3^0)^2, \quad a\mapsto a +10 \alpha_3^0 t -\alpha_1^0 +40 \alpha_2^0 \alpha_3^0 -300(\alpha_3^0)^3
$$
the equation \eqref{ode-general} reduces to the 
 \ref{PI21} equation.
 
So, our claim is that the equation  \ref{PI21} describes the isomonodromic deformations of the system \eqref{main1} but, however, it does not possess the Painlev\'e property
as being satisfied by a 4-parameter Puiseux series in powers
of $(t-b)^{1/3}$. Nevertheless, as we show, this equation can be
effectively analyzed using the isomonodromy deformation techniques 
developed during last decades. In particular, the ramification points $t=b$ (see eq. \eqref{ord3} above) can be determined by a kind of a spectral problem for the quintic anharmonic oscillator
\begin{eqnarray}\label{oscil}
&&
\frac{d^2 y}{d\lambda^2} =\tfrac1{30} U(\lambda)\, y
\nonumber\\
&&
\\
&&
U(\lambda)=
\tfrac{1}{30}\lambda^5
-b\lambda^3
+a_0\lambda^2
+(\tfrac{360}{49}b^2+\tfrac{33}{10}a_9)\lambda
-\tfrac{230}{21}a_0b
+\tfrac{143}{30}a_{11}.
\nonumber
\end{eqnarray}
Here $a_0$, $a_9$, $a_{11}$ are the coefficients of the Puiseaux 
expansion of $a(t)$ (see eq. \eqref{a_Puiseux_series} below). 
Namely, the coefficients of the quintic polynomial must be chosen 
in such a way that the equation \eqref{oscil} possesses solutions 
exponentially decaying on certain contours in the complex plane, 
look for details in Remark~\ref{scattering_problem}.

In Section~2 we discuss the second member of the \PI\ 
hierarchy, equation \PItwo, as an equation that describes the 
isomonodromic solutions to the KdV equation, and find a nonlinear 
ODE, equation \PItwone, that controls its pole dynamics. In 
Section~3, we derive the above isomonodromic representation for 
the eq.\ \PItwone\ presenting a regularization of the linear system for 
\PItwo\ along its singularity locus. In Section~4, we consider 
the Puiseux series solution for \PItwone\ and the singularity 
reduction of the corresponding linear system at the branch points.
In section~5, we set the Riemann--Hilbert problems for all the
previously introduced wave functions and discuss existence and uniqueness
of their solutions. In Section~6, we present the asymptotic analysis
of the Riemann--Hilbert problems corresponding to a physically interesting
special solution of \PItwo\ and \PItwone\ implementing the steepest-descent 
method introduced by Deift and Zhou. Our main result in this section is
the description of the large $t$ asymptotics of the singularity locus
as a theta-divisor on a modulated elliptic curve,
\begin{equation*}
3x_0+\tfrac{2}{3}\xi_1(x_0)=
\tfrac{7}{4}t^{-7/4}
\bigl(
(n+\tfrac{1}{2})\omega_a(x_0)
+(m+\tfrac{1}{2})\omega_b(x_0)
\bigr),
\end{equation*}
where $n\in{\mathbb Z}$, $m\in{\mathbb Z}_+$ and $x_0=a(t)t^{-3/2}$, the function $\xi_1(x_0)$ is determined by equations \eqref{spectral_curve_genus1}--\eqref{Boutroux_tt}, $\omega_a(x_0)$ and $\omega_b(x_0)$ are the period integrals \eqref{AB_def} on the elliptic curve \eqref{surface_def}.
In Section~7, we find explicitly the pole dynamics of the special solution 
in a vicinity of the attracting point $x_0^*=\tfrac{2\sqrt5}{9\sqrt3}$,
\begin{multline}\label{pole_dynamics_sol}
a^{(m,n)}(t)=
\tfrac{2\sqrt5}{9\sqrt3}\,t^{3/2}
+t^{-1/4}(m+\tfrac{1}{2})\tfrac{\sqrt[4]{3}}{2\sqrt[4]{5}\sqrt{7}}
\ln\bigl(
t^{-7/4}(m+\tfrac{1}{2})\tfrac{3^{11/4}}{2^{6}5^{3/4}7^{5/2}e}
\bigr)
\\
+i\pi(n+\tfrac{1}{2})\tfrac{\sqrt[4]{3}}{\sqrt[4]{5}\sqrt{7}}
+{\mathcal O}(t^{-7/2}\ln^2t),
\end{multline}
where $m\in{\mathbb Z}_+$ and $n\in{\mathbb Z}$ enumerate
the points of the pole lattice. Thus, asymptotically for large $t\to+\infty$, the poles of the special solution to \PItwo ~ never collide. It would be interesting to study their collisions for finite time.
In Section~8, we briefly discuss our results and various open problems.

\section{Linear system system for \PItwo\ and its singularity locus}

\subsection{Linear system for \PItwo}

The 4th order 1st degree polynomial ODE,
\begin{equation}\label{p12}
u_{xxxx}
+10u_x^2
+20uu_{xx}
+40
\bigl(
u^3
-6tu
+6x
\bigr)
=0,
\end{equation}
usually denoted as \PItwo, is the second member of the so-called
\PI\ hierarchy. This equation governs the isomonodromy deformations
of a linear polynomial ODE described by the system 
\begin{equation}\label{Lax_pair_p12}
\Psi_{\lambda}=A\Psi,\quad
\Psi_x=B\Psi,\quad
\Psi_t=C\Psi,
\end{equation}
where the connection matrices are explicitly written
using the generators of ${\mathfrak s}{\mathfrak l}(2,{\mathbb C})$,
$\sigma_3=\left(\begin{smallmatrix}1&0\\0&-1\end{smallmatrix}\right)$, 
$\sigma_+=\left(\begin{smallmatrix}0&1\\0&0\end{smallmatrix}\right)$ and
$\sigma_-=\left(\begin{smallmatrix}0&0\\1&0\end{smallmatrix}\right)$,
\begin{subequations}\label{ABC_p12}
\begin{align}\label{A_p12}
\begin{split}
A=&\tfrac{1}{240}
\bigl\{
\bigl[
-4u_x\lambda-(12uu_x+u_{xxx})
\bigr]\sigma_3
\\
&\quad+\bigl[
8\lambda^2+8u\lambda+(12u^2+2u_{xx}-120t)
\bigr]\sigma_+
\\
&\quad+\bigl[
8\lambda^3-8u\lambda^2
-(4u^2+2u_{xx}+120t)\lambda
\\
&\hskip3cm
+(16u^3-2u_x^2+4uu_{xx}+240x)
\bigr]\sigma_-
\bigr\},
\end{split}
\\
\label{B_p12}
B=&\sigma_+
+(\lambda-2u)\sigma_-,
\\
\label{C_p12}
C=&\tfrac{1}{6}u_x\sigma_3
-\tfrac{1}{3}(\lambda+u)\sigma_+
-\tfrac{1}{3}(
\lambda^2
-u\lambda
-2u^2
-\tfrac{1}{2}u_{xx})\sigma_-,
\end{align}
\end{subequations}
Besides (\ref{p12}) which is the compatibility condition
$$
\left[ \partial_\lambda-A, \partial_x-B\right]=0
$$ 
of (\ref{A_p12}) and (\ref{B_p12}), the system (\ref{ABC_p12})
also implies the KdV equation,
\begin{equation}\label{KdV}
u_t+uu_x+\tfrac{1}{12}u_{xxx}=0,
\end{equation}
and equations which follow from (\ref{p12}) and (\ref{KdV}).
The solutions of \PItwo\ (\ref{p12}) compatible with the KdV equation
(\ref{KdV}) are called the {\em isomonodromic} \PItwo\ solutions to KdV.

\subsection{Laurent series solutions to \PItwo\ compatible with 
the KdV equation}

It is known that equation \PItwo\ passes Painlev\'e tests as being 
presented in the list by C.~Cosgrove \cite{Cos} under the symbol F-V
with $y=-u$ and the parameters $\alpha=240t$, $k=240$ and
$\beta=0$. See \cite{shimomura2} for a proof of the Painlev\'e property based on the Riemann--Hilbert correspondence.

Below, we are especially interested in the 4-parameter series 
solution to \PItwo\ with the following initial terms (because
\PItwo\ is polynomial in all its variables, the construction
of the complete formal series via a recurrence relation 
is straightforward),
\begin{multline}\label{Laurent_gen}
u(x)=-\frac{1}{(x-a)^2}
+\sum_{k=0}^{\infty}c_k(x-a)^k,
\\
c_1=0,\quad
c_2=3(c_0^2-2t),\quad
c_4=\tfrac{30}{7}a
-10c_0^3
+\tfrac{120}{7}c_0t,
\\
c_5=3-\tfrac{3}{2}c_0c_3,\quad
c_7=\tfrac{12}{7}(tc_3 - c_0),\quad\dots
\\
a,c_0,c_3,c_6\quad\mbox{are arbitrary}.
\end{multline}

This 4-parameter series is compatible with the KdV equation 
(\ref{KdV}) if the coefficients $c_0,c_3,c_6$ depend on $t$
in a particular way described by the function $a(t)$,
\begin{equation}\label{cj_via_a_in_KdV}
c_0=a',\quad
c_3=2a'',\quad
c_6=-\tfrac{1}{3}a'''
-3(a')^4
+12(a')^2t
-12t^2,\quad
(\ )'=\frac{d}{dt}.
\end{equation}
In its turn, the pole position $a(t)$ must satisfy the equation \PItwone,
\begin{equation}\label{a_ODE_from_KDV}
a^{(4)}
+120(a')^3a''
-120a'a''t
-\tfrac{200}{3}(a')^2
-\tfrac{40}{3}aa''
+\tfrac{200}{9}t=0.
\end{equation}
Besides the pole dynamics, the function $x=a(t)$ parametrizes 
the singularity locus for the linear system (\ref{Lax_pair_p12}), 
(\ref{ABC_p12}).

\begin{rem}
Equation \PItwo\ admits also the 3-parameter series solution,
\begin{multline}\label{Laurent_deg}
u(x)=
-\frac{3}{(x-\tilde a)^2}+\sum_{k=2}^{\infty}\tilde c_k(x-a)^k,
\\
\tilde c_2=-\tfrac{6}{7}t,\quad
\tilde c_3=0,\quad
\tilde c_4=-\tfrac{10}{21},\quad
\tilde c_5=-1,\quad
\tilde c_7=0,\quad
\tilde c_9=-\tfrac{50}{147}t,\quad
\\
\tilde c_{10}=\tfrac{2}{13}\bigl(
-\tfrac{50}{1323}\tilde a^2
+\tfrac{11}{7}\tilde c_6t
+\tfrac{36}{343}t^3
\bigr),\quad
\tilde c_{11}=
-\tfrac{20}{441}\tilde a,\quad
\dots,
\\
\tilde a,\tilde c_6,
\tilde c_8\quad
\mbox{are arbitrary},
\end{multline}
which, however, is not compatible with the KdV equation (\ref{KdV}). Clearly it corresponds to the triple collisions of the poles \eqref{Laurent_gen}.
\end{rem}

\section{Singularity reduction in the linear system for \PItwo\ 
and the isomonodromy property of \PItwone}

\subsection{Singularity reduction}
It is remarkable, that the linear system for \PItwo\ can be
regularized along the singularity locus $x=a(t)$:
\begin{thm}
Let $u(x)$ be the 4-parameter Laurent series solution
(\ref{Laurent_gen}) for equation {\em\PItwo}. Then the gauge transformation $Z=R\Psi$ with 
the gauge matrix
\begin{multline}\label{gauge_combined_def}
R(\lambda)=
2^{\frac{1}{2}\sigma_3}
(\lambda-2c_0)^{\frac{1}{2}\sigma_3}
(x-a)^{-\frac{1}{2}\sigma_3}
\tfrac{1}{\sqrt2}(\sigma_3+\sigma_1)
\times\hfill
\\
\times
(\lambda-2c_0)^{-\frac{1}{2}\sigma_3}2^{\frac{1}{2}\sigma_3}
(x-a)^{-\sigma_3}
\tfrac{1}{\sqrt2}(\sigma_3+\sigma_1)
(x-a)^{-\frac{1}{2}\sigma_3},\quad
\sigma_1=(\begin{smallmatrix}0&1\\1&0\end{smallmatrix}),
\end{multline}
regularizes the connections $\partial_{\lambda}-A$ and
$\partial_{x}-B$ along the singularity locus $x=a(t)$ and, if
$$
c_0=a'\quad\mbox{\rm and}\quad
c_3=2a'',
$$
it regularizes also the connection $\partial_t-C$ along the same locus.
\end{thm}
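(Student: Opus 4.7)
My plan is a direct verification by Laurent expansion. Under the gauge $Z=R\Psi$ the connection matrices transform by the standard rule $\tilde X=RXR^{-1}+R_\nu R^{-1}$, where $\nu\in\{\lambda,x,t\}$ and $X\in\{A,B,C\}$; the task is to show that $\tilde A,\tilde B,\tilde C$ are holomorphic at $x=a$. Substituting the Laurent series (\ref{Laurent_gen}) and its $x$-derivatives into the explicit formulas (\ref{A_p12})--(\ref{C_p12}), one reads off the principal parts: $B$ has only a double pole in its $\sigma_-$ entry (coming from $-2u$), while $A$ and $C$ acquire poles in all three components through $u_x$, $u_{xx}$, $u_{xxx}$.

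The gauge matrix (\ref{gauge_combined_def}) has a \emph{scale--rotate--scale--rotate--scale} structure: three diagonal blocks of type $(x-a)^{-k\sigma_3/2}$ (the middle block dressed by the $\lambda$-diagonal piece $2^{\sigma_3/2}(\lambda-2c_0)^{-\sigma_3/2}$ and the outer one by $2^{\sigma_3/2}(\lambda-2c_0)^{\sigma_3/2}$) interleaved with two copies of the constant involution $\tfrac{1}{\sqrt2}(\sigma_3+\sigma_1)$. I would process the factors one at a time. Each $(x-a)$-scaling reduces the pole order of the current connection in the direction it favors, while the involutions relocate subdominant singular pieces into the directions the next scaling is able to kill; the $\lambda$-diagonal dressings $(\lambda-2c_0)^{\pm\sigma_3/2}$ are inserted precisely to compensate the apparent pole at $\lambda=2c_0$ which the rotations would otherwise generate in $R_\lambda R^{-1}$. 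After all five blocks, the remaining cancellations invoke the algebraic relations among $c_0,c_2,c_4,c_5,c_7$ dictated by \PItwo\ in (\ref{Laurent_gen}); the outcome is that $\tilde B$ is regular and $\tilde A$ takes exactly the polynomial form (\ref{main1}) whose entries depend only on $a$ and on the free coefficients $c_0,c_3,c_6$.

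For the $t$-direction the new phenomenon is $R_tR^{-1}$, which injects $a'(t)$ and $c_0'(t)$ into the residual singular expansion. A more efficient route than brute force is to observe that regularity of $\tilde C$ at $x=a$ is equivalent to compatibility of (\ref{Laurent_gen}) with the KdV evolution (\ref{KdV}): substituting the series into $u_t=-uu_x-\tfrac{1}{12}u_{xxx}$ and matching powers of $(x-a)$ forces $a'=c_0$ at the most singular order, and at the next nontrivial order (after using $c_2=3(c_0^2-2t)$) forces $a''=\tfrac{1}{2}c_3$. These two relations are exactly what make the coefficients of $(x-a)^{-1}$ and $(x-a)^0$ in the extra term $R_tR^{-1}$ vanish after the same scale--rotate--scale--rotate--scale dressing that worked for $A$, so no further conditions arise.

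The main obstacle is bookkeeping, not principle: the intertwined action of three scalings and two involutions on Laurent expansions (carried at least through $(x-a)^3$ to reach the coefficient $c_3$) produces many competing terms and many cross-cancellations. A useful organising device is to record, after processing each factor of $R$, how the maximal pole orders of the diagonal and anti-diagonal parts of the current connection have dropped; the order of the factors in (\ref{gauge_combined_def}) is designed so that this count decreases monotonically, with the involutions serving to reroute subdominant singularities into the directions that the next scaling can eliminate.
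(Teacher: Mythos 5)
Your proposal is correct and is essentially the paper's own argument: the paper proves this theorem simply ``by inspection,'' i.e.\ by the same direct substitution of the Laurent series and term-by-term verification you describe, and its accompanying remark explains the gauge matrix as exactly the sequence of shearing transformations (scalings interleaved with the involution $\tfrac{1}{\sqrt2}(\sigma_3+\sigma_1)$) that you identify. Your additional observation that the conditions $c_0=a'$, $c_3=2a''$ are forced by compatibility with the KdV flow matches the paper's equation \eqref{cj_via_a_in_KdV} and is a sensible way to organize the $t$-direction check.
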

\begin{proof}
The proof is straightforward by inspection.
\end{proof}

\begin{rem}
The above gauge transformation is constructed as a sequence
of the so-called shearing transformations. Namely, substituting 
the initial terms of the relevant Laurent series instead of $u$ 
into the matrix $A$, one finds the leading order singular term
proportional to the sum 
$-(x-a)^{-4}\sigma_-+(x-a)^{-3}\sigma_3+(x-a)^{-2}\sigma_+$. 
Conjugation by $(x-a)^{-\sigma_3/2}$ ``shares'' the singularity 
order among the off-diagonal entries. The matrix coefficient of the 
term of order ${\mathcal O}((x-a)^{-3})$ becomes nilpotent, 
and its conjugation with 
$\tfrac{1}{\sqrt2}(\sigma_3+\sigma_1)$ turns it to the normal
$\sigma_-$-form. This brings into play the lower order singularities,
and the procedure can be repeated. 
\end{rem}

\begin{rem}
It is known that the singularity reductions exist also for 
the linear systems associated with the classical Painlev\'e equations.
However the construction of the corresponding gauge transformation 
does not rely on a sequence of the shearing transformations.
In contrast, the required gauge transformations simply turn the 
1st order rank 2 differential systems into scalar 2nd order
linear ODEs.
\end{rem}

\begin{conj}\label{singularity_reduction_existence}
The singularity reduction exists for an arbitrary isomonodromy system.
\end{conj}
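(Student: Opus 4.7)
My plan is to generalize the shearing-transformation construction of the preceding theorem. First I would make precise what ``singularity'' of an arbitrary isomonodromy system means: following the Miwa--Malgrange framework, I take the singularity locus to be the zero divisor of the Jimbo--Miwa $\tau$-function on the base of the deformation, equivalently, the polar divisor of the nonlinear dependent variables. Near such a locus, the nonlinear isomonodromy equation admits a formal local solution expanded in (possibly fractional) powers of a defining local parameter --- a Laurent or Puiseux series analogous to \eqref{Laurent_gen} --- and substituting this expansion into the coefficient matrix $A(\lambda)$ produces an explicit leading singular behaviour of the $\lambda$-connection along the locus.

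The strategy is then as follows. Fix a local parameter $s$ transverse to the singularity locus and expand $A(\lambda,s)$ as a Laurent series in $s$ with coefficients polynomial (or rational) in $\lambda$, and identify the most singular term in $s$. If this term is semisimple, conjugate to its eigenbasis and absorb one order of the singularity into a diagonal gauge factor $s^{\Lambda}$. If it is nilpotent, apply a shearing transformation $s^{\alpha\sigma_3}$ with a suitable $\alpha\in\mathbb Q$, followed by a conjugation with the rotation $\tfrac{1}{\sqrt2}(\sigma_3+\sigma_1)$ that places the leading nilpotent matrix in the standard lower-triangular $\sigma_-$-form, exactly as in the remark after the theorem. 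In either case the maximal order of the pole in $s$ decreases. The key claim is that this iteration terminates after finitely many steps; finiteness should follow from an indicial (Kovalevskaya) argument, since the Kovalevskaya exponents of the nonlinear system at the singularity determine both the dimension of the family of formal solutions and, via dominant balance, an a~priori bound on the pole order of $A(\lambda,s)$. Once the $\lambda$-connection is regularized, the zero-curvature compatibility $[\partial_\lambda-A,\partial_x-B]=0=[\partial_\lambda-A,\partial_t-C]$ forces the transformed $B$- and $C$-connections to be regular as well, up to an entire correction that a further $\lambda$-independent gauge would kill, as indeed happens in the theorem above where no separate regularization of $B$ and $C$ was needed.

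The main obstacle I expect is the multi-valuedness of the individual shearing factors in $s$: in \eqref{gauge_combined_def} it is a rather special arithmetic cancellation among the fractional exponents that yields a single-valued $R$. For a general system, as soon as the local Puiseux expansion has denominators larger than two --- as already occurs in \eqref{ord3} --- there is no a~priori reason for the accumulated fractional powers of $s$ and of the matrix exponent $\sigma_3$ to recombine into integer exponents of the matrix entries of~$R$. Overcoming this would require either a cohomological vanishing argument ensuring that the branch obstruction is trivial because the nonlinear solution is already single-valued after lifting to a suitable branched cover, or the weaker statement that the singularity reduction exists on that cover rather than downstairs. I would therefore expect the honest form of the conjecture to produce a regular linear system on a finite-sheeted cover of the deformation base, branched precisely along the $\tau$-divisor, with further subtleties at higher-codimension strata where the divisor fails to be smooth.
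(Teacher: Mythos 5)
The statement you are asked to prove is labelled a \emph{Conjecture} in the paper (Conjecture~\ref{singularity_reduction_existence}); the authors give no proof of it, only two worked examples (the regularization of the \PItwo\ system along $x=a(t)$ and of the \PItwone\ system at $t=b$) together with the heuristic, in the Remark following the first of these, that the gauge is built from a sequence of shearing transformations. So there is no proof in the paper to compare yours against, and what you have written is --- as you yourself acknowledge --- a research programme rather than a proof. The gaps you flag are real and remain open: (i) termination of the shearing iteration is only gestured at via a Kovalevskaya-exponent bound, with no argument that the pole order strictly decreases at every step when the leading coefficient is neither semisimple nor regular nilpotent (in rank $>2$ the dichotomy ``semisimple or nilpotent'' does not exhaust the cases, and a partial Jordan structure can stall the reduction); (ii) the single-valuedness of the accumulated fractional-power gauge is precisely the arithmetic miracle visible in \eqref{gauge_combined_def} and \eqref{branch_point_gauge_combined}, and you offer no mechanism forcing it in general; (iii) even the definition of ``singularity reduction for an arbitrary isomonodromy system'' is left unspecified (which divisor, which strata, which class of gauges is admissible).

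One concrete step in your sketch is actually contradicted by the paper's own example: you claim that once the $\lambda$-connection is regularized, zero curvature automatically regularizes the deformation connections. But in the paper's theorem the gauge $R$ regularizes $\partial_\lambda-A$ and $\partial_x-B$ for \emph{arbitrary} free coefficients $a,c_0,c_3,c_6$ of the Laurent series, whereas regularizing $\partial_t-C$ requires imposing the extra constraints $c_0=a'$ and $c_3=2a''$. So the compatibility argument alone does not suffice; the deformation connection is regularized only on the sublocus where the formal singular solution is compatible with the deformation flow. Your proposal would need to isolate this compatibility condition as a separate hypothesis (or prove it is automatic for genuinely isomonodromic families), and until the termination and single-valuedness issues are settled the conjecture remains exactly that.
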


\subsection{Isomonodromy representation for \PItwone}
The linear system for \PItwo\ regularized along its singularity locus $x=a(t)$
yields the linear system for equation \PItwone.

\begin{thm} Equation {\em\PItwone} controls the isomonodromy deformations
of a linear matrix ODE with the polynomial coefficients described by the system
\begin{equation}\label{a_Lax_pair}
Z_{\lambda}={\mathcal A}Z,\quad
Z_t={\mathcal C}Z,
\end{equation}
with the coefficient matrices
\begin{multline}\label{AC_a_Lax_pair}
{\mathcal A}=
-\tfrac{3}{20}a''\sigma_3
+\bigl(
\tfrac{1}{30}\lambda^4
+\alpha_3\lambda^3
+\alpha_2\lambda^2
+\alpha_1\lambda
+\alpha_0
\bigr)
\sigma_+
+(\tfrac{1}{30}\lambda-\alpha_3)\sigma_-,
\\
\alpha_3=-\tfrac{1}{10}a',\quad
\alpha_2=\tfrac{3}{10}(a')^2-t,\quad
\alpha_1=-\tfrac{9}{10}(a')^3+3a't+a,
\\
\alpha_0=3aa'
-\tfrac{27}{4}(a')^4
+\tfrac{9}{5} t (a')^2
+\tfrac{27}{20}c_6
+\tfrac{81}{5}t^2,
\\
\shoveleft{
{\mathcal C}=
\bigl[
-\tfrac{1}{3}\lambda^3
+\beta_2\lambda^2
+\beta_1\lambda
+\beta_0
\bigr]
\sigma_+
-\tfrac{1}{3}\sigma_-.
}\hfill
\\
\beta_2=2a',\quad
\beta_1=10t-9(a')^2,\quad
\beta_0=36(a')^3
-60a't
-10a.
\end{multline}
\end{thm}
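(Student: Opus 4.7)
The plan is to obtain the linear system $(\mathcal{A}, \mathcal{C})$ from the \PItwo\ Lax pair $(A, B, C)$ by applying the regularizing gauge $R$ of the previous theorem and then restricting to the singularity locus $x = a(t)$, so that compatibility descends directly from the compatibility of the original Lax pair.

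First I would compute $\mathcal{A} := (R A R^{-1} + R_\lambda R^{-1})|_{x=a}$. By the previous theorem this expression is regular at $x = a(t)$ once the Laurent series (\ref{Laurent_gen}) is substituted for $u$ and the KdV conditions $c_0 = a'$, $c_3 = 2a''$ are imposed. Expanding in powers of $(x-a)$, keeping the constant term, and inserting the expression for $c_6$ from (\ref{cj_via_a_in_KdV}), I expect to recover exactly the quartic $\sigma_+$-entry, the linear $\sigma_-$-entry and the $\sigma_3$-entry $-\tfrac{3}{20}a''$ stated in (\ref{AC_a_Lax_pair}). For the $t$-connection the correct object is
\[
\mathcal{C} = a'\,\mathcal{B}_{\rm full}|_{x=a} + \mathcal{C}_{\rm full}|_{x=a},
\]
since if $Z(x,t,\lambda)$ denotes the regularized fundamental solution then its restriction $Z_0(t,\lambda) := Z(a(t),t,\lambda)$ evolves in $t$ by the chain rule. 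The second explicit check is then that the diagonal parts conspire to cancel in this particular combination and that the off-diagonal parts reproduce the $\beta_0, \beta_1, \beta_2$ claimed in (\ref{AC_a_Lax_pair}).

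For the isomonodromy assertion, I would then argue as follows. The compatibility of the original triple $(A, B, C)$ is equivalent to $u$ satisfying \PItwo\ together with KdV (\ref{KdV}); gauge conjugation by $R$ and restriction to $x = a$ preserve both compatibilities, and produce exactly $\mathcal{A}_t - \mathcal{C}_\lambda + [\mathcal{A}, \mathcal{C}] = 0$ as a combination of the $(A,B)$- and $(A,C)$-zero-curvature equations evaluated at $x=a$. The Laurent series (\ref{Laurent_gen}) solves \PItwo\ tautologically by construction, while its KdV compatibility, via the recurrence that forces (\ref{cj_via_a_in_KdV}) and determines the evolution of $c_6$, translates into precisely \PItwone\ for $a(t)$. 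So under \PItwone\ the pair $(\mathcal{A}, \mathcal{C})$ is compatible, i.e.\ isomonodromic. For the converse I would expand the zero-curvature identity $\mathcal{A}_t - \mathcal{C}_\lambda + [\mathcal{A}, \mathcal{C}] = 0$ as a polynomial in $\lambda$ of degree at most $5$; the coefficients at positive powers of $\lambda$ must vanish identically as consequences of the algebraic form of $\mathcal{A}$ and $\mathcal{C}$, leaving the $\lambda^0$ coefficient as the single dynamical equation, which I expect to coincide with \PItwone.

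The main obstacle is the explicit computation of $R A R^{-1}$, $R B R^{-1}$ and $R C R^{-1}$: the gauge $R$ in (\ref{gauge_combined_def}) is assembled from several shearing factors interleaved with rotations $\tfrac{1}{\sqrt2}(\sigma_3+\sigma_1)$, and the individual conjugates are initially Laurent series in $(x-a)$ whose cancellation of negative powers is guaranteed only after repeated simplification using the recurrence for the $c_k$. One must also push the Laurent expansion of $u$ out to order $(x-a)^6$ in order to pick up $c_6$, which enters through $\alpha_0$. All the manipulations are mechanical but the volume of terms is significant, so careful bookkeeping of the shearing sequence is essential.
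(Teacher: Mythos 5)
Your proposal is correct, and its decisive step coincides with the paper's entire proof: the paper simply takes the explicitly given pair $({\mathcal A},{\mathcal C})$, computes the compatibility condition ${\mathcal A}_t-{\mathcal C}_{\lambda}+[{\mathcal A},{\mathcal C}]=0$ ``by inspection'' as a polynomial identity in $\lambda$, and reads off the two resulting scalar equations --- the expression for $c_6$ from (\ref{cj_via_a_in_KdV}) and equation \PItwone\ (\ref{a_ODE_from_KDV}). What you do differently is to spend most of the argument \emph{deriving} $({\mathcal A},{\mathcal C})$ from the \PItwo\ Lax pair by conjugating with the gauge $R$ of (\ref{gauge_combined_def}) and restricting to the locus $x=a(t)$, with the chain-rule combination $a'\,\widetilde B+\widetilde C$ for the $t$-connection; this is a legitimate and more structural route which explains the provenance of the matrices and gives the forward implication (compatibility under \PItwone) without further computation, but it is not needed for the theorem as stated, since the coefficient matrices are prescribed in the statement, and it carries the substantial extra burden of expanding the multi-factor shearing conjugations in powers of $(x-a)$. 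One small imprecision to fix: the zero-curvature identity does not reduce to a single dynamical equation at $\lambda^0$; because $\alpha_0$ contains the a priori free function $c_6$, the vanishing of the coefficients yields \emph{two} independent relations, one fixing $c_6=-\tfrac{1}{3}a'''-3(a')^4+12(a')^2t-12t^2$ and one giving \PItwone, exactly as the paper records.
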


\begin{proof}
By inspection, the compatibility condition, 
${\mathcal A}_t-{\mathcal C}_{\lambda}
+[{\mathcal A},{\mathcal C}]=0$, yields
\begin{equation*}
c_6=
-\tfrac{1}{3}a'''
-3(a')^4
+12(a')^2t
-12t^2,
\end{equation*}
and
\begin{equation*}
a''''
+120(a')^3a''
-120a'a''t
-\tfrac{200}{3}(a')^2
-\tfrac{40}{3}aa''
+\tfrac{200}{9}t
=0,
\end{equation*}
i.e.\ equations for $c_6$ in (\ref{cj_via_a_in_KdV})
and \PItwone\ (\ref{a_ODE_from_KDV}).
\end{proof}

\section{Puiseux series for $a(t)$ and the singularity reduction
of the linear system at the branch point of $a(t)$}

It is easy to verify that equation \PItwone\ admits
the formal series solution in powers of $(t-b)^{1/3}$
with the following initial terms (again, since the equation
is polynomial in all variables, the recurrence relation
for its coefficients is straightforward),
\begin{multline}\label{a_Puiseux_series}
a(t)=\sum_{k=0}^{\infty}
a_k(t-b)^{k/3},
\\
a_1=-1,\quad
a_2=a_3=a_4=0,\quad
a_5=-\tfrac{6}{7}{b},\quad
a_6=0,\quad
\\
a_7=\tfrac{10}{21}{a_0},\quad
a_8=-\tfrac{3}{2},\quad
a_{10}=0,\quad
a_{12}=-\tfrac{135}{49}{b},\quad
\dots,
\\
b,
a_0,
a_9,
a_{11}\quad
\mbox{are arbitrary}.
\end{multline}

The system (\ref{a_Lax_pair}), (\ref{AC_a_Lax_pair})
is singular, however regularizable, at the branch point $t=b$ 
of $a(t)$:
\begin{thm}
Let $a(t)$ be the 4-parameter Puiseux series solution
(\ref{a_Puiseux_series}) for equation \PItwone. Then
the rational in $\lambda$ and $t$ gauge transformation
$X=QZ$ with the gauge matrix
\begin{multline}\label{branch_point_gauge_combined}
Q(\lambda)=
\sigma_3
\bigl(
\tfrac{120}{7}b-\lambda^2
\bigr)^{-\frac{1}{2}\sigma_3}
(t-b)^{\frac{1}{6}\sigma_3}
\tfrac{1}{\sqrt2}(\sigma_3+\sigma_1)
\times
\\
\times
2^{-\sigma_3}
\lambda^{-\frac{1}{2}\sigma_3}
\bigl(
\tfrac{120}{7}b-\lambda^2
\bigr)^{\frac{1}{2}\sigma_3}
(t-b)^{\frac{1}{3}\sigma_3}
\tfrac{1}{\sqrt2}(\sigma_3+\sigma_1)
\times
\\
\times
\lambda^{\frac{1}{2}\sigma_3}
(t-b)^{\frac{1}{3}\sigma_3}
2^{-\frac{1}{2}\sigma_3}
\tfrac{1}{\sqrt2}(\sigma_3+\sigma_1)
(t-b)^{\frac{1}{2}\sigma_3}
\end{multline}
regularizes the connection $\partial_{\lambda}-{\mathcal A}$
at the branch point $t=b$.
\end{thm}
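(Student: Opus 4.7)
The plan is to establish the regularization by direct inspection, paralleling the proof of Theorem~3.1 and employing the same method of iterated shearing transformations described in the remark following it. The rotation by $\tfrac{1}{\sqrt2}(\sigma_3+\sigma_1)$ serves, as before, to convert a nilpotent singular leading term into its standard $\sigma_-$-form, after which the next shearing can further reduce the singularity order. The structure of $Q(\lambda)$ in \eqref{branch_point_gauge_combined} essentially encodes a three-stage shearing, alternating with two copies of this rotation, and closing with a final $\sigma_3$-flip.

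First I would substitute the Puiseux series \eqref{a_Puiseux_series} into the coefficients $\alpha_0,\dots,\alpha_3$ and into $a''$, using $a'(t)\sim-\tfrac13(t-b)^{-2/3}$ and $a''(t)\sim\tfrac29(t-b)^{-5/3}$, in order to display the dominant singular structure of ${\mathcal A}$ at $t=b$. The most singular entry sits in the $\sigma_+$ position, with a pole of order $8/3$ produced by the $-\tfrac{27}{4}(a')^4$ piece of $\alpha_0$; the diagonal entries have order $5/3$ from $a''$; the $\sigma_-$ position carries only a mild pole of order $2/3$ coming from $-\alpha_3$. The rightmost shearing $(t-b)^{\tfrac12\sigma_3}$ then equalizes all entries to the common order $(t-b)^{-5/3}$, and a direct $2\times 2$ computation shows that this leading matrix is nilpotent --- which is precisely the condition for the rotation $\tfrac{1}{\sqrt2}(\sigma_3+\sigma_1)$ to put it into $\sigma_-$-form. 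I would then apply the next shearing block $(t-b)^{\tfrac13\sigma_3}$ conjugated by $\lambda^{\pm\tfrac12\sigma_3}$ (these $\lambda$-factors absorb the linear $\lambda$-dependence of the residual singular part), verify nilpotency once more, and repeat the procedure one last time with $(t-b)^{\tfrac16\sigma_3}$ and $(\tfrac{120}{7}b-\lambda^2)^{\pm\tfrac12\sigma_3}$, the quadratic $\lambda$-factor now cancelling the $\lambda^2$-dependence that enters at this stage. The final left multiplication by $\sigma_3$ is a cosmetic sign adjustment needed to produce the normal form of $\widetilde{\mathcal A}=Q{\mathcal A}Q^{-1}+Q_{\lambda}Q^{-1}$.

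The main obstacle will be the verification of nilpotency at each of the three stages. This is not a formal consequence of the shearing procedure: at each step one needs a nontrivial algebraic identity between the Puiseux coefficients $a_0,a_1,a_5,a_7,a_8,\dots$, and these identities are precisely the recurrence relations that equation \PItwone\ imposes on the series \eqref{a_Puiseux_series}. Given the explicit values $a_1=-1$, $a_2=a_3=a_4=0$, $a_5=-\tfrac{6}{7}b$, $a_7=\tfrac{10}{21}a_0$, $a_8=-\tfrac32$, already recorded in \eqref{a_Puiseux_series}, each required nilpotency becomes a finite symbolic identity that can be checked by direct substitution. Once nilpotency is confirmed at all three stages, the transformed matrix $\widetilde{\mathcal A}$ has no singular part at $t=b$, which is the claim.
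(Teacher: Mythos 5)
Your proposal is correct and takes essentially the same route as the paper, which offers no explicit proof beyond the ``straightforward by inspection'' verification used for the analogous Theorem~3.1 and the iterated-shearing interpretation given in the remark following it. Your identification of the leading singularity orders ($-8/3$ in the $\sigma_+$ entry, $-5/3$ on the diagonal, $-2/3$ in the $\sigma_-$ entry, equalized to $-5/3$ by the first shearing with a nilpotent leading coefficient) matches what the direct substitution of the Puiseux series yields, the only quibble being that the $(t-b)^{-8/3}$ term of $\alpha_0$ receives contributions from $\tfrac{27}{20}c_6$ (via $a'''$ and $(a')^4$) as well as from $-\tfrac{27}{4}(a')^4$.
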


Here $\sigma_1$, $\sigma_2$, $\sigma_3$ are Pauli matrices,
$$
\sigma_1=\left( \begin{array}{cc} 0 & 1\\ 1 & 0\end{array}\right), \quad \sigma_2=\left(\begin{array}{cr} 0 & -i \\ i & 0\end{array}\right), \quad 
\sigma_3 =\left(\begin{array}{cr} 1 & 0\\ 0 & -1\end{array}\right).
$$ 

The new function $X(\lambda)$ satisfies the linear matrix ODE
equivalent to the anharmonic oscillator equation \eqref{oscil}, which does not 
admit any continuous isomonodromic deformation,
\begin{multline}\label{X_anharmonic_a_pole_p12}
X_{\lambda}X^{-1}=
\begin{pmatrix}
0&\frac1{30}\\
U(\lambda)&0
\end{pmatrix},
\\
U(\lambda)=
\tfrac{1}{30}\lambda^5
-b\lambda^3
+a_0\lambda^2
+(\tfrac{360}{49}b^2+\tfrac{33}{10}a_9)\lambda
-\tfrac{230}{21}a_0b
+\tfrac{143}{30}a_{11}.
\end{multline}

\begin{rem}\label{scattering_problem}
The anharmonic oscillator equation \eqref{oscil} can be regarded as a stationary 
Schr\"odinger equation. Thus it is possible to introduce a scattering 
problem and to map the set of the parameters $b$, $a_0$, $a_9$, 
$a_{11}$ to the set of the scattering data. Indeed, let us introduce 
the Jost solutions $\psi_{\pm}^{(k)}$ uniquely
determined by the asymptotics
\begin{multline*}
\psi_{\pm}^{(k)}\simeq
\lambda^{-\frac{5}{4}\sigma_3}
\tfrac{1}{\sqrt2}
\begin{pmatrix}
1\\\pm1
\end{pmatrix}
e^{\pm\theta},\quad
\theta=
\tfrac{1}{105}\lambda^{7/2}
-\tfrac{1}{3}t\lambda^{3/2}
+x\lambda^{1/2},
\\
\lambda\to\infty,\quad
\arg\lambda=\pi+\tfrac{2\pi}{7}k,\quad
k=0,\pm1,\pm2,\pm3.
\end{multline*}
Here, we define $\lambda^{1/2}$ on the $\lambda$ complex plane 
cut along the positive part of the real line and choose its principal 
branch. Then the incident wave $\psi\simeq\psi_-^{(0)}$ produces 
several reflected and transmitted waves,
\begin{equation*}
\psi=
t_k\psi_-^{(k)}+r_k\psi_+^{(k)},\quad
\arg\lambda=\pi+\tfrac{2\pi}{7}k,\quad
k=0,\pm1,\dots,\pm3,
\end{equation*}
where the coefficients $t_k$ and $r_k$ are called the transmission 
and reflection coefficients, respectively. Only $4$ of the $14$ 
coefficients are independent. Indeed, there exist
six relations of the form $r_{k+1}=r_k$, $t_k=t_{k-1}$, $k=-2,0,2$, 
which come from the preservation of the amplitude of the dominant 
solution in the sectors
$\arg\lambda\in[\pi+\tfrac{2\pi}{7}k,\pi+\tfrac{2\pi}{7}(k+1)]$,
$k=-3,\dots,2$; the equality $t_0=1$ normalizes the amplitude of 
the incident wave; two conditions $t_3=r_{-3}=0$ mean the solution 
is subdominant as $\lambda\to+\infty$; the last condition
$r_3=it_{-3}$ means the continuity of this subdominant
solution across the positive part of the real line,
\begin{multline}\label{tk_rk_rel}
t_3=r_{-3}=0,\quad
t_0=t_{-1}=1,\quad
t_2=t_1(=0),
\\
r_1=r_0(=i),\quad
r_{-1}=r_{-2}(=0),\quad
r_3=r_2=it_{-2}=it_{-3}(=i).
\end{multline}
The four independent scattering coefficients, say, 
$t_1,r_1,t_{-2},r_{-2}$, can be used to reconstruct 
the parameters $b$, $a_0$, $a_9$ and $a_{11}$ determining 
the potential $U(\lambda)$. 

In (\ref{tk_rk_rel}), the numbers in parentheses
specify the values of the free transmission and reflection 
coefficients corresponding to the triple collisions of the poles of the special 
solution to \PItwo\ of our interest, see below. Along the oscillatory
directions, the wave function corresponding to this special potential 
is given by the Jost solutions $\psi_-^{(k)}$ ($k=-1,-2,-3$), 
$i\psi_+^{(k)}$ ($k=1,2,3$) and by the sum $\psi_-^{(0)}+i\psi_+^{(0)}$ 
as $\lambda<0$. This implies another simple characterization of the 
potential of our interest.  Defining $\lambda^{1/2}$ on 
the plane cut along the negative part of the real axis, the Schr\"odinger 
equation $\psi''=\tfrac{1}{30}U(\lambda)\psi$ has the solution with 
the uniform in the sector $\arg\lambda\in(-\pi,\pi)$ asymptotics 
$\psi\simeq \lambda^{-5/4}e^{-\theta}$. Existence of a solution with such asymptotics is tantamount to vanishing of four Stokes multipliers $s_{\pm 1}=s_{\pm 2}=0$ (see below).

Below, however, we characterize $X(\lambda)$ adopting the Riemann-Hilbert 
problem instead of the scattering problem.
\end{rem}

\begin{rem}
Observe the successive reduction of the number of the deformation 
parameters the $\lambda$-equations depend on: for the connection 
$\partial_{\lambda}-A$, the space of the deformation parameters 
$(t,x)$ is 2-dimensional, for $\partial_{\lambda}-{\mathcal A}$, 
the deformation parameter space reduces to the 1-di\-men\-si\-on\-al 
space of $t$, and for $\partial_{\lambda}-\tfrac{1}{30}\sigma_+-U\sigma_-$, 
it is 0-dimensional. 
\end{rem}

\begin{rem}
Besides the continuous isomonodromic deformations considered above, 
it is possible to introduce discrete Poincar\'e-like isomonodromy mappings 
from one to another branch of $a(t)$ and from one to another point of 
the lattice $(b,a(b))$.
\end{rem}

\begin{rem}
It is natural to denote the above listed connections by the symbols 
\LItwotwo, \LItwone and \LItwozero, respectively, to reflect explicitly 
the number of the continuous deformation parameters involved. 
The equations of the isomonodromic deformations are thus denoted by
the symbols \PItwotwo\ (same as \PItwo), \PItwone
and \PItwozero (the latter equation is a Poincar\'e-like mapping
of the lattice $(b,a(b))$). 
\end{rem}

\section{Riemann--Hilbert problems for \LItwotwo, \LItwone 
and \LItwozero}

The boundary Riemann--Hilbert (RH) problem consists in finding a piece-wise 
holomorphic function by its prescribed analytic properties:

i) asymptotics at some marked points;

ii) discontinuity properties across a piece-wisely oriented 
graph.

\subsection{Asymptotics of the canonical solutions 
$\Psi_k(\lambda)$, $Z_k(\lambda)$ and $X_k(\lambda)$}

Each of the above linear matrix ODEs for $\Psi$, $Z$ and $X$ has one
irregular singularity at $\lambda=\infty$ and no other singular
points. We introduce the formal series solutions that represent 
the asymptotic behavior of the genuine solutions to these linear ODEs
in the interior of particular sectors near $\lambda=\infty$.

Since all the canonical asymptotics differ from each other in a rational 
left diagonal multiplier, it is convenient to unify the notations
and distinguish all three cases introducing a parameter $\nu$:
\begin{multline}\label{nu_canonical_as}
\Phi_k^{(\nu)}(\lambda)=
\lambda^{\frac{\nu}{4}\sigma_3}
\tfrac{1}{\sqrt2}(\sigma_3+\sigma_1)
(I+{\mathcal O}(\lambda^{-1/2}))
e^{\theta\sigma_3},
\\
\shoveleft
\theta=\tfrac{1}{105}\lambda^{7/2}
-\tfrac{1}{3}t\lambda^{3/2}
+x\lambda^{1/2},
\\
\lambda\to\infty,\quad
\arg\lambda\in
(-\tfrac{3\pi}{7}+\tfrac{2\pi}{7}k,
\tfrac{\pi}{7}+\tfrac{2\pi}{7}k),\quad
k\in{\mathbb Z},
\\
\nu\in\{-1,3,-5\},\quad
\Phi^{(-1)}=\Psi,\quad
\Phi^{(3)}=Z,\quad
\Phi^{(-5)}=X.
\hfill
\end{multline}
Here, the principal branch of the square root of $\lambda$ is chosen.
\begin{rem}
Using (\ref{nu_canonical_as}) for the asymptotic solutions 
$Z_k$ and $X_k$, one has to take into account the reductions 
$x=a(t)$ for $\nu=3$ and $t=b$, $x=a(b)$ for $\nu=-5$. 
Below however, using  (\ref{nu_canonical_as}) to set the corresponding
RH problems, we assume that the exponential $\theta$ depends on
arbitrary complex deformation parameters $t$ and $x$
because it is not allowed to use the unknown values $a(t)$ and 
$b,a(b)$ as the data in the RH problems formulated below.
As the result, it is necessary to remember that the solvability domains for
the RH problems shrink to the lines $x=a(t)$ for $\nu=3$ and 
to a lattice $(t,x)=(b,a(b))$ for $\nu=-5$.
\end{rem}

\subsection{Stokes multipliers}

The canonical solutions differ from each other in a triangular right
matrix multipliers called the Stokes matrices,
\begin{equation}\label{Sk_def}
\Phi_{k+1}(\lambda)=\Phi_k(\lambda)S_k,\quad
S_{2k-1}=
\begin{pmatrix}
1&s_{2k-1}\\
0&1
\end{pmatrix},\quad
S_{2k}=
\begin{pmatrix}
1&0\\
s_{2k}&1
\end{pmatrix},
\end{equation}
moreover
\begin{equation}\label{Sk_struct}
s_{k+7}=s_k,\quad
s_k+s_{k+2}+s_ks_{k+1}s_{k+2}=-i(1+s_{k+4}s_{k+5}),\quad
k\in{\mathbb Z}.
\end{equation}

\begin{rem}
Since the gauge transformations $R(\lambda)$ and $Q(\lambda)$ 
are rational, they do not affect the Stokes matrices.
\end{rem}

\begin{rem}
The Stokes multipliers $s_k$ are the first integrals of equations 
\PItwotwo\ (\ref{p12}) and \PItwone\ (\ref{a_ODE_from_KDV}).
\end{rem}

The solutions of the RH problems below are constructed using collections 
of the canonical solutions in the above presented sectors
in a way to have the uniform asymptotics in a vicinity of infinity.
The normalization of the RH problem however is rather subtle thing.
To this aim, observe the prolongated canonical asymptotics
\begin{equation}\label{Psi_canonical_as_long}
\tfrac{1}{\sqrt2}(\sigma_3+\sigma_1)
\lambda^{\frac{1}{4}\sigma_3}
\Psi_k(\lambda)
e^{-\theta\sigma_3}=
(I+\tfrac{u}{2\lambda}\sigma_1
+{\mathcal O}(\lambda^{-3/2}))
e^{\lambda^{-1/2}d_1\sigma_3},
\end{equation}
\begin{multline}\label{Z_canonical_as_long}
\tfrac{1}{\sqrt2}(\sigma_3+\sigma_1)
\lambda^{-\frac{3}{4}\sigma_3}
Z_k(\lambda)
e^{-\theta\sigma_3}
=
\\
=(
I
-\tfrac{3a'}{2\lambda}
\sigma_1
+\tfrac{1}{\lambda^2}(\tfrac{9}{4}(a')^2-\tfrac{15}{2}t)
\sigma_1
+{\mathcal O}(\lambda^{-5/2})
)
\times
\\
\times
\exp\{\lambda^{-1/2}
d_1\sigma_3
+\lambda^{-3/2}
d_3\sigma_3
+\lambda^{-2}
d_4I\},
\end{multline}
and
\begin{multline}\label{X_canonical_as_long}
\tfrac{1}{\sqrt2}(\sigma_3+\sigma_1)
\lambda^{\frac{5}{4}\sigma_3}
X_k(\lambda)
e^{-\theta\sigma_3}
=(I+\tfrac{15b}{2\lambda^2}\sigma_1
-\tfrac{15a_0}{2\lambda^3}\sigma_1
+{\mathcal O}(\lambda^{-7/2}))
\times
\\
\times
\exp\{\lambda^{-1/2}d_1\sigma_3
+\lambda^{-3/2}d_3\sigma_3
+\lambda^{-5/2}d_5\sigma_3\}.
\end{multline}
In the above right hand side exponential factors,
the parameters $d_j$ can be expressed in terms 
of the coefficients of the relevant equation. 
For instance, $d_1$ in (\ref{Psi_canonical_as_long}) is
one of two Hamiltonians of \PItwo. However, these 
particular relations are not important for us at this stage. 

Now, we have the following

\begin{RHP}\label{nu_RHP}
Given the complex values of the parameters $x$, $t$ and $s_k$, 
$k\in{\mathbb Z}$, satisfying (\ref{Sk_struct}), and denoting
\begin{equation}\label{theta_Phi_def}
\theta=
\tfrac{1}{105}\lambda^{7/2}
-\tfrac{1}{3}t\lambda^{3/2}
+x\lambda^{1/2},
\end{equation}
find the piece-wise holomorphic $2\times2$ matrix function 
$\Phi^{(\nu)}(\lambda)$, $\nu\in\{-1,3,-5\}$, with the 
following properties:
\begin{enumerate}
\item 
\begin{equation}\label{Phi_as_def}
\lim_{\lambda\to\infty}
\tfrac{1}{\sqrt2}(\sigma_3+\sigma_1)
\lambda^{-\frac{\nu}{4}\sigma_3}
\Phi^{(\nu)}(\lambda)
e^{-\theta\sigma_3}=I,
\end{equation}
moreover
\begin{itemize}
\item
there exist a constant $d_1$ such that
\begin{multline}\label{Phi_c0_triviality_condition}
\hskip0.72in
\tfrac{1}{\sqrt2}(\sigma_3+\sigma_1)
\lambda^{-\frac{\nu}{4}\sigma_3}\Phi^{(\nu)}(\lambda)
e^{-\theta\sigma_3}=
\\
=I
+\lambda^{-1/2}d_1\sigma_3
+{\mathcal O}(\lambda^{-1});
\end{multline}

\item
if $\nu\in\{3,-5\}$, then there exist
constants $d_1$, $d_3$ such that
\begin{multline}\label{Z_c1_triviality_condition}
\hskip0.72in
\tfrac{1}{\sqrt2}(\sigma_3+\sigma_1)
\lambda^{-\frac{3}{4}\sigma_3}
2^{\frac{1}{2}\sigma_3}
\Phi^{(\nu)}(\lambda)
e^{-(\theta+d_1\lambda^{-1/2})\sigma_3}=
\\
=I+\lambda^{-1}c_1
+\lambda^{-3/2}d_3\sigma_3
+{\mathcal O}(\lambda^{-2})
\end{multline}
with some constant matrix $c_1$;
\item
if $\nu=-5$, then there exist constants $d_1,d_3$ and $d_5$ such that
\begin{multline}\label{X_c2_triviality_condition}
\hskip0.72in
\tfrac{1}{\sqrt2}(\sigma_3+\sigma_1)
\lambda^{-\frac{3}{4}\sigma_3}
2^{\frac{1}{2}\sigma_3}
\Phi^{(\nu)}(\lambda)
e^{-(\theta+d_1\lambda^{-1/2}+d_3\lambda^{-3/2})\sigma_3}=
\\
=I+\lambda^{-2}c_2
+\lambda^{-5/2}d_5\sigma_3
+{\mathcal O}(\lambda^{-3})
\end{multline}
with some constant matrix $c_2$;
\end{itemize}
\item
$\|\Phi^{(\nu)}(\lambda)\|<const$ as $\lambda\to0$;
\item 
on the union of eight rays 
$\gamma=\rho\cup\bigl(\cup_{k=1}^7\gamma_{k-4}\bigr)$, where
$\gamma_k=\bigl\{\lambda\in{\mathbb C}\colon
\arg\lambda=\tfrac{2\pi}{7}k\bigr\}$, $k=-3,-2,\dots,2,3$, and 
$\rho=\bigl\{\lambda\in{\mathbb C}\colon
\arg\lambda=\pi\bigr\}$, all oriented towards infinity,
the jump condition holds true,
\begin{equation}\label{jump_rel}
\Phi^{(\nu)}_+(\lambda)=\Phi^{(\nu)}_-(\lambda)S(\lambda),
\end{equation}
where $\Phi^{(\nu)}_+(\lambda)$ and $\Phi^{(\nu)}_-(\lambda)$ are 
limits of $\Phi^{(\nu)}(\lambda)$ on $\gamma$ from the left and 
from the right, respectively, and the piece-wise constant matrix 
$S(\lambda)$ is given by equations
\begin{subequations}\label{jump_matrices}
\begin{align}\label{Sk_jumps}
&S(\lambda)\bigr|_{\lambda\in\gamma_k}=S_k,\quad
S_{2k}=I+s_{2k}\sigma_-,\quad
S_{2k-1}=I+s_{2k-1}\sigma_+,
\\
\label{sigma1_jump}
&S(\lambda)\bigr|_{\rho}=i\sigma_1.
\end{align}
\end{subequations}
\end{enumerate}
\end{RHP}

\begin{figure}[htb]
\begin{center}
\mbox{\epsfig{figure=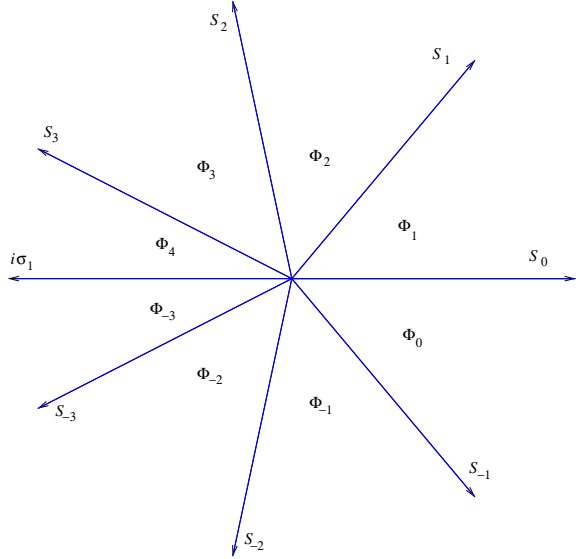,width=0.6\textwidth}}
\end{center}
\caption{The jump contour $\gamma$ for the RH problem~\ref{nu_RHP}
and canonical solutions $\Phi_j(\lambda)$, $j=-3,-2,\dots,3,4$.}
\label{fig1}
\end{figure}

\subsubsection{Uniqueness of the solution to the RH problem~\ref{nu_RHP}}

The solution of the RH problem~\ref{nu_RHP} is unique.
Indeed, the scalar function $\det\Phi^{(\nu)}(\lambda)$ is continuous 
across the set of rays $\gamma$ and is bounded at the origin. Therefore, 
$\det\Phi^{(\nu)}(\lambda)$ is an entire function, and taking into account 
(\ref{Phi_as_def}) and applying the Liouville theorem, 
$\det\Phi^{(\nu)}(\lambda)\equiv-1$. Assume for the moment that there 
are two solutions of the RH problem~\ref{nu_RHP}, $\Phi^{(\nu)}(\lambda)$ 
and $\tilde\Phi^{(\nu)}(\lambda)$. Consider their ratio, 
$\chi(\lambda)=
(
\begin{smallmatrix}
a&b\\
c&d
\end{smallmatrix}
)
=\tilde\Phi^{(\nu)}(\lambda)(\Phi^{(\nu)}(\lambda))^{-1}$.
As it is easy to see, $\chi(\lambda)$ is continuous across all 
the rays of the set $\gamma$ and is bounded at the origin, thus
$\chi(\lambda)$ is an entire function. Using (\ref{Phi_as_def}),
\begin{equation*}
\lim_{\lambda\to\infty}
\lambda^{-\frac{\nu}{4}\sigma_3}
\chi(\lambda)
\lambda^{\frac{\nu}{4}\sigma_3}
=\lim_{\lambda\to\infty}
\begin{pmatrix}
a&b\lambda^{-\nu/2}\\
c\lambda^{\nu/2}&d
\end{pmatrix}
=I.
\end{equation*}
Since all the entries $a,b,c,d$ are entire functions in $\lambda$, 
the Liouville theorem yields the ambiguity 
\begin{multline}\label{tilde_Psi_Psi_def}
\Phi^{(\nu)}(\lambda)\mapsto
\tilde\Phi^{(\nu)}(\lambda)=
P^{(\nu)}(\lambda)
\Phi^{(\nu)}(\lambda),
\\
P^{(-1)}(\lambda)=
I+c_0\sigma_-,\quad
P^{(3)}(\lambda)=
I+(c_0\lambda+c_1)\sigma_+,
\\
P^{(-5)}(\lambda)=
I+(c_0\lambda^2+c_1\lambda+c_2)\sigma_-,
\end{multline}
where $c_j$ are arbitrary constants.
These ambiguities however are eliminated using the asymptotic conditions
in (\ref{Phi_c0_triviality_condition}), (\ref{Z_c1_triviality_condition})
and (\ref{X_c2_triviality_condition}).

\subsection{Solvability of the RH problems~\ref{nu_RHP}
and the Malgrange divisor}

Given jump matrices, the set of points $(x,t)$ at which 
the RH problem is not solvable is called the Malgrange divisor. 
It coincides with the zero locus of a holomorphic $\tau$-function
by Miwa and with the singularity locus of the isomonodromy 
deformation equation.

For what follows, it is convenient to articulate our assumptions
on the singularities and critical points of equations \PItwo\ 
and \PItwone.
\begin{conj}\label{PP_conjecture}
1)~Equation \PItwotwo\ has no other movable singularities except for the
movable poles (\ref{Laurent_gen}) satisfying (\ref{cj_via_a_in_KdV}), 
(\ref{a_ODE_from_KDV}), and their triple collisions (\ref{Laurent_deg}).
2)~Equation \PItwone\ has no other movable singularities or critical points
except for the branch points (\ref{a_Puiseux_series}).
\end{conj}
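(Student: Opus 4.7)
The plan is to combine the Miwa--Malgrange Riemann--Hilbert approach with a local formal analysis of the Laurent/Puiseux expansions admitted by the two equations, exploiting the isomonodromy representation from Sections~2--4 and the RH problem~\ref{nu_RHP}.

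For part~1), I would first invoke the Painlev\'e property for \PItwo\ established via the RH correspondence in \cite{shimomura2}, which ensures that any solution $u(x,t)$ extends meromorphically in $x$ on the universal cover of the punctured $x$-plane. Hence every movable singularity in $x$ is a pole, and it remains to classify the formal Laurent expansions about such a pole $x=a(t)$. A direct dominant balance on \PItwo\ shows that the leading coefficient must equal either $-1$ (giving the 4-parameter family \eqref{Laurent_gen}) or $-3$ (giving the 3-parameter family \eqref{Laurent_deg}); the vanishing of the usual resonance obstructions then forces the lower coefficients to be precisely those listed, so the Painlev\'e $\alpha$-test classification is complete. Imposing compatibility with \eqref{KKdV} on \eqref{Laurent_gen} forces $c_0,c_3,c_6$ to take the form \eqref{cj_via_a_in_KdV}, which in turn yields the ODE \eqref{a_ODE_from_KDV} for $a(t)$. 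One then checks that the 3-parameter family \eqref{Laurent_deg} arises exactly as the coalescence of three nearby poles of \eqref{Laurent_gen}.

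For part~2), I would use the isomonodromy RH problem for \LItwone\ together with the singularity reduction of Section~4. On the complement of the Malgrange divisor the RH problem~\ref{nu_RHP} (with $\nu=3$) is uniquely solvable, and extraction of $a(t)$ from the large-$\lambda$ expansion~\eqref{Z_canonical_as_long} yields a holomorphic branch of the solution. Approaching a generic point $t=b$ of the Malgrange divisor, the failure of the RH problem corresponds exactly to a triple collision of the poles of the underlying $u(x,t)$, i.e.\ to a match with the degenerate Laurent expansion~\eqref{Laurent_deg} at $(t,x)=(b,a(b))$. At such a point the gauge transformation \eqref{branch_point_gauge_combined} regularizes the linear system, mapping it to the anharmonic oscillator \eqref{X_anharmonic_a_pole_p12}, whose coefficients are rational in $t-b$ with a cube-root resolution. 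The four parameters $b,a_0,a_9,a_{11}$ of the Puiseux series \eqref{a_Puiseux_series} match the four independent Stokes multipliers of the anharmonic oscillator, which under the RH correspondence gives the completeness of this normal form.

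The main obstacle is precisely that the classical Miwa--Malgrange theorems assume unbranched irregular singularities, a hypothesis violated here since \PItwone\ itself has branch points. One therefore cannot quote that theory directly; instead, one must show that every failure mode of the RH problem~\ref{nu_RHP} admits a shearing-type gauge regularization of the kind used in Theorem~3.1 and Theorem~4.1, and that the only local normal forms produced by this regularization procedure are \eqref{Laurent_gen}, \eqref{Laurent_deg}, and \eqref{a_Puiseux_series}. A natural route is the following: on a small disk around a candidate singularity, trivialize the RH data, apply the Birkhoff--Malgrange factorization to locate the jump in the stability type of the connection, and check by an explicit dimension count (using Conjecture~\ref{singularity_reduction_existence} as a guiding principle) that the only admissible shearings in the present rank-$2$, degree-$5$ setting are the ones already described. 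This dimension count — matching four parameters in the singular expansion against four monodromy parameters — is the conceptual content that excludes essential singularities and higher-order branch points, and is where I expect the technical work to concentrate.
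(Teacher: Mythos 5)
The first thing to say is that the statement you are proving is labelled a \emph{Conjecture} in the paper (Conjecture~\ref{PP_conjecture}): the authors offer no proof, and they use it only as a working assumption to organize the discussion of the Malgrange divisor. So there is no proof of record to compare your argument against, and the real question is whether your proposal closes the gap. It does not. For part~1) your reduction is reasonable: if \cite{shimomura2} indeed establishes meromorphy of all solutions of \PItwo\ in $x$, then every movable singularity is a pole, and the dominant balance $40c(c^2+4c+3)=0$ together with the resonance analysis does classify the local expansions as \eqref{Laurent_gen} or \eqref{Laurent_deg}; this part is plausibly complete modulo verifying that the cited result covers exactly this equation and the joint $(x,t)$-dependence required by compatibility with \eqref{KKdV}. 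But part~2) is where the conjecture has actual content, and there your argument is circular and incomplete in the same place the paper leaves open. You cannot invoke Miwa--Malgrange (as you note, the whole point of the paper is that \PItwone\ violates its conclusion), you explicitly lean on Conjecture~\ref{singularity_reduction_existence}, which is itself unproven, and the decisive step --- that \emph{every} failure of the RH problem~\ref{nu_RHP} with $\nu=3$ is resolved by a shearing-type gauge leading to the single normal form \eqref{a_Puiseux_series} --- is asserted via an unexecuted ``dimension count.''

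Concretely, the missing idea is an argument excluding all other singular behavior of $a(t)$: essential singularities, movable natural boundaries, accumulation of branch points, or branching of order different from three. A formal Laurent/Puiseux classification only describes singularities already assumed to admit an algebraic local expansion; it cannot rule out the non-classifiable alternatives, which is precisely what distinguishes a Painlev\'e-type completeness theorem from a list of admissible local series. Likewise, the matching of the four parameters $(b,a_0,a_9,a_{11})$ with four independent Stokes data of the oscillator \eqref{X_anharmonic_a_pole_p12} is stated in the paper only as a remark (Remark~\ref{scattering_problem}) without a proof of bijectivity, so it cannot carry the weight of a completeness argument. Your outline is a sensible research program --- essentially the one the paper implicitly suggests --- but as written it is a strategy for attacking an open conjecture, not a proof of it, and you should present it as such.
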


These assumptions mean that the smooth branches of the Malgrange divisor 
for the RH problem~\ref{nu_RHP} with $\nu=-1$ (corresponding to \PItwotwo)
are parameterized by equations $x=a(t)$ and thus coincide with the solvability
set for the RH problem~\ref{nu_RHP} with $\nu=3$ (corresponding to \PItwone).
Similarly, the vertices of the Malgrange divisor of the RH problem with $\nu=-1$ 
correspond to the branch points $t=b$ of $a(t)$ and therefore coincide with 
the solvability set of the RH problem with $\nu=-5$.

In other words, Conjecture~\ref{PP_conjecture} implies the following
remarkable properties of the domains of solvability of all three RH problems:
\begin{enumerate}
\item
these domains do not pairwise intersect;
\item
all these domains together cover the deformation parameter space 
${\mathbb C}\times{\mathbb C}\ni(t,x)$.
\end{enumerate}
\begin{figure}[htb]
\begin{center}
\mbox{\epsfig{figure=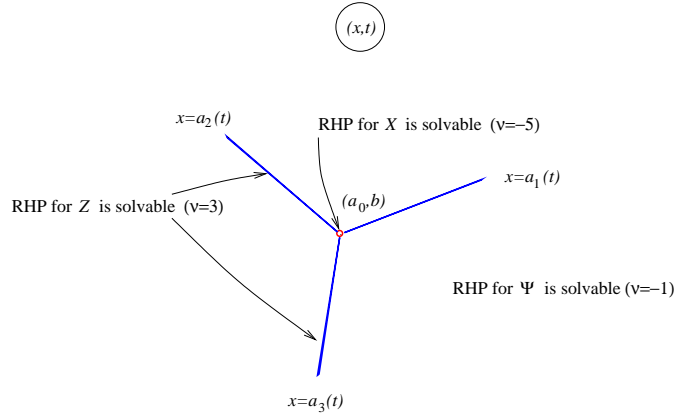,width=0.7\textwidth}}
\end{center}
\caption{The scheme of the Malgrange divisor and the domains
of the RH problems solvability for $\nu=-1,3,-5$.}
\label{fig1a}
\end{figure}

\subsection{Normalized non-homogeneous and 
homogeneous RH problems}

In this section, we describe another interesting feature of the family
of three RH problems. Roughly speaking, any solution of the 
{\em non-homogeneous} RH problem with the bigger value of $|\nu|$ 
allows one to construct infinitely many solutions of the {\em homogeneous}
problems with the smaller value of $|\nu|$.

Precise formulation of this property requires some more
accuracy. First of all, let us introduce the RH problems
equivalent to those above but unbranched 
and normalized at infinity. To this aim, write
\begin{equation}\label{zeta4_hat_Phi_transform}
\lambda=\zeta^4,\quad
\hat\Phi^{(\nu)}_k(\zeta)=
\tfrac{1}{\sqrt2}(\sigma_3+\sigma_1)
\zeta^{-4\nu\sigma_3}
\Phi^{(\nu)}_k(\zeta^4)
e^{-\theta(\zeta^4)},
\end{equation}
and define the piece-wise holomorphic functions,
\begin{equation}\label{hat_Psi_def}
\hat\Phi^{(\nu)}(\zeta)=
\hat\Phi^{(\nu)}_k(\zeta),\quad
\arg\zeta\in\bigl(
\tfrac{\pi}{14}(k-1),\tfrac{\pi}{14}k
\bigr),\quad
\nu=-1,3,-5.
\end{equation}
These functions solve the non-homogeneous RH problems 
on the union of rays $\cup_{k=-13}^{14}\ell_k$, 
$\ell_k=\{\zeta\in{\mathbb C}\colon\
\arg\zeta=\tfrac{\pi}{14}k\}$ 
with a singular point at the origin and normalized 
to the unit at infinity. The corresponding homogeneous 
RH problem differs from the non-homogeneous counterpart
in the asymptotics of $\Phi^{(\nu)}(\zeta)$ that vanishes 
as $\zeta\to\infty$.

Slightly abusing our notations, we formulate the non-homogeneous
and homogeneous RH problems as follows:

\begin{RHP}\label{hat_Phi_RHP}
Find a piece-wise holomorphic function 
$\hat\Phi_{I}^{(\nu)}(\zeta)$ 
$(\mbox{resp.,}\ \hat\Phi_{0}^{(\nu)}(\zeta))$, 
$\nu\in\{-1,3,-5\}$,
with the following properties:
\begin{enumerate}
\item
$\lim_{\zeta\to\infty}\hat\Phi_{I}^{(\nu)}(\zeta)=I$\quad
$(\mbox{resp.,}\quad\lim_{\zeta\to\infty}\hat\Phi_{0}^{(\nu)}(\zeta)=0)$;
\item
across the rays $\ell_k=\{\zeta\in{\mathbb C}\colon
\arg\zeta=\tfrac{\pi}{14}k\}$, $k=-13,\dots,13,14$,
all oriented towards infinity, the following jump conditions hold,
\begin{equation}\label{hat_Phi_jump_infty}
\hat\Phi^{(\nu)}_+(\zeta)=
\hat\Phi^{(\nu)}_-(\zeta)
e^{\theta\sigma_3}S_ke^{-\theta\sigma_3},\quad
\zeta\in\ell_k,\quad
\theta=\tfrac{1}{105}\zeta^{14}
-\tfrac{1}{3}t\zeta^{6}
+x\zeta^2;
\end{equation}
\item
$\|\zeta^{\nu\sigma_3}
\tfrac{1}{\sqrt2}(\sigma_3+\sigma_1)
\hat\Phi^{(\nu)}(\zeta)\|<const$,\quad
as $\zeta\to0$.
\end{enumerate}
\end{RHP}

The following theorem holds true
\begin{thm}
Any solution $\hat\Phi_{I}^{(3)}(\zeta)$ 
to the non-homogeneous RH problem~\ref{hat_Phi_RHP} 
with $\nu=3$ yields infinitely many solutions 
$\hat\Phi_{0}^{(-1)}(\zeta)$ to the homogeneous 
RH problem~\ref{hat_Phi_RHP} with $\nu=-1$.
Any solution $\hat\Phi_{I}^{(-5)}(\zeta)$ to 
the non-homogeneous RH problem~\ref{hat_Phi_RHP} 
with $\nu=-5$ yields infinitely many solutions 
$\hat\Phi_{0}^{(\nu)}(\zeta)$ to the homogeneous 
RH problem~\ref{hat_Phi_RHP} with $\nu=3,-1$.
\end{thm}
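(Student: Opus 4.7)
The plan is to construct the homogeneous solutions explicitly by left-multiplying the given non-homogeneous solution (in its $\lambda$-picture $\Phi^{(\nu)}_I(\lambda)$) by a simple matrix of the form $p(\lambda)\sigma_\pm$ with $\lambda=\zeta^4$ and $p$ a scalar polynomial of bounded degree. The key structural observation is that all three Riemann--Hilbert problems in Problem~\ref{hat_Phi_RHP} share the same jump contour and the same conjugated Stokes matrices $e^{\theta\sigma_3}S_ke^{-\theta\sigma_3}$; they differ only in the asymptotic normalization at $\zeta=\infty$. Because the Stokes factors act from the right, any left-multiplication by a matrix single valued across $\gamma$ automatically preserves the jumps. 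The natural candidate multipliers are therefore precisely the ambiguity matrices $P^{(\nu)}$ appearing in the uniqueness discussion (\ref{tilde_Psi_Psi_def}): constant $\sigma_+$ when starting from $\nu=3$ and a polynomial $(c_0\lambda+c_1)\sigma_-$ when starting from $\nu=-5$.

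Writing $R=\tfrac{1}{\sqrt2}(\sigma_3+\sigma_1)$ and letting $\Phi^{(\nu)}_I$ be the $\lambda$-picture realization of $\hat\Phi^{(\nu)}_I$ obtained by inverting (\ref{zeta4_hat_Phi_transform}), I would take for the first assertion the one-parameter family
\begin{equation*}
\hat\Phi^{(-1)}_{0,c}(\zeta):=c\,R\,\zeta^{\sigma_3}\,\sigma_+\,\Phi^{(3)}_I(\zeta^4)\,e^{-\theta(\zeta^4)\sigma_3},\quad c\in\mathbb C,
\end{equation*}
and for the second assertion, analogously,
\begin{equation*}
\hat\Phi^{(3)}_{0,c}:=c\,R\zeta^{-3\sigma_3}\sigma_-\Phi^{(-5)}_I(\zeta^4)\,e^{-\theta\sigma_3},\qquad
\hat\Phi^{(-1)}_{0,c_0,c_1}:=R\zeta^{\sigma_3}(c_0\zeta^4+c_1)\sigma_-\Phi^{(-5)}_I(\zeta^4)\,e^{-\theta\sigma_3}.
\end{equation*}
Three properties must then be checked. \emph{Jumps}: automatic by the left-multiplication principle above, since $\sigma_\pm$ and powers of $\lambda=\zeta^4$ are entire. \emph{Origin boundedness}: once the scalar factors cancel in the test norm $\|\zeta^{\nu'\sigma_3}R\,\hat\Phi^{(\nu')}\|$ (the matrix $R$ satisfies $R^2=I$) and one uses $\theta(\lambda)\to 0$ as $\lambda\to 0$, the condition reduces uniformly in $\nu'$ to the plain boundedness of $p(\lambda)\sigma_\pm\Phi^{(\nu)}_I(\lambda)$ at $\lambda=0$, which is immediate. \emph{Decay at $\zeta=\infty$}: use the leading asymptotics $\Phi^{(\nu)}_I\sim\lambda^{\nu\sigma_3/4}Re^{\theta\sigma_3}$ and the anticommutation $\sigma_\pm\lambda^{a\sigma_3}=\lambda^{-a\sigma_3}\sigma_\pm$ to compute the leading behaviors in the $\hat\Phi$-picture as $c\zeta^{-2}R\sigma_+R$, $c\zeta^{-2}R\sigma_-R$ and $(c_0\zeta^{-2}+c_1\zeta^{-6})R\sigma_-R$ respectively, all vanishing at infinity.

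The only delicate point — and the main content of the argument — is pinning down the \emph{maximal} admissible degree of $p(\lambda)$ in each multiplier $p(\lambda)\sigma_\pm$: too high a degree produces a growing term in the $\hat\Phi$-picture and violates the homogeneous normalization, while a lower degree overlooks some solutions. A short power count using $\sigma_\pm\lambda^{a\sigma_3}=\lambda^{-a\sigma_3}\sigma_\pm$ shows that, for the reduction $\nu\to\nu'$, the leading asymptotic of $\lambda^k\sigma_\pm\Phi^{(\nu)}_I$ in the $\hat\Phi^{(\nu')}$-picture scales like $\zeta^{4k-(\nu+\nu')}$, so decay forces $4k-(\nu+\nu')<0$. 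This yields admissible degrees $0$, $0$, $1$ for the reductions $(\nu,\nu')=(3,-1),(-5,3),(-5,-1)$, producing respectively $1$-, $1$-, and $2$-parameter families. The dimensions match those of the ambiguity polynomials $P^{(\nu)}$ in (\ref{tilde_Psi_Psi_def}). Each choice of the complex constants $c$, $(c_0,c_1)$ produces a distinct solution of the relevant homogeneous RHP, and varying these parameters over $\mathbb C$ or $\mathbb C^2$ produces the claimed infinite families, completing the proof.
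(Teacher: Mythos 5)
Your construction is correct and is essentially the paper's own proof: writing $R=\tfrac{1}{\sqrt2}(\sigma_3+\sigma_1)$, your first family simplifies to $c\,\zeta^{-2}\,(R\sigma_+R)\,\hat\Phi_I^{(3)}(\zeta)$, which is precisely the $q=0$ member of the two-parameter family $\zeta^{-2}N\hat\Phi_I^{(3)}(\zeta)$, $N=R\left(\begin{smallmatrix}0&p\\0&q\end{smallmatrix}\right)R$, used in the paper, and the three checks (jump preservation under single-valued left multiplication, boundedness at the origin via condition (3), decay at infinity) are the same ones the paper performs. The only blemish is your unified exponent $\zeta^{4k-(\nu+\nu')}$, which should be $\zeta^{4k+(\nu+\nu')}$ in the $\sigma_-$ cases (as your own explicit leading terms $c\zeta^{-2}$ and $c_0\zeta^{-2}+c_1\zeta^{-6}$ already show); this sign slip affects neither your explicit verifications nor the conclusion.
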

\begin{proof}
We prove the first part of the theorem. The proof of 
the second part is similar.

Consider a solution $\hat\Phi_{I}^{(3)}(\zeta)$ to the RH 
problem~\ref{hat_Phi_RHP} for $\nu=3$. It has the asymptotics
\begin{multline*}
\hat\Phi_{I}^{(3)}(\zeta)=I+{\mathcal O}(\zeta^{-2}),\quad
\zeta\to\infty,
\\
\shoveleft{
\hat\Phi_{I}^{(3)}(\zeta)=
\tfrac{1}{\sqrt2}(\sigma_3+\sigma_1)
\zeta^{-3\sigma_3}
\bigl(
M_0+{\mathcal O}(\zeta^4)
\bigr),\quad
\zeta\to0,\quad
\zeta\in\omega_0,
}\hfill
\end{multline*}
where $M_0$ is a constant matrix, $\det M_0\neq0$, and 
$\omega_0=\{\zeta\in{\mathbb C}\colon
\arg\zeta\in(0,\tfrac{\pi}{14})\}$.
Then
\begin{multline*}
\hat\Phi_0^{(-1)}(\zeta)=\zeta^{-2}N\hat\Phi_{I}^{(3)}(\zeta),
\\
N=\tfrac{1}{\sqrt2}(\sigma_3+\sigma_1)
\begin{pmatrix}
0&p\\
0&q
\end{pmatrix}
\tfrac{1}{\sqrt2}(\sigma_3+\sigma_1),\quad
p,q=const,
\end{multline*}
has the same jump properties as $\hat\Phi_{I}^{(3)}(\zeta)$ 
and the asymptotics of the form,
\begin{multline*}
\hat\Phi_0^{(-1)}(\zeta)={\mathcal O}(\zeta^{-2}),\quad
\zeta\to\infty,
\\
\shoveleft{
\hat\Phi_0^{(-1)}(\zeta)=
\tfrac{1}{\sqrt2}(\sigma_3+\sigma_1)
\zeta^{\sigma_3}
\begin{pmatrix}
0&p\\
0&q\zeta^2
\end{pmatrix}
\bigl(
M_0
+{\mathcal O}(\zeta^4)
\bigr),\quad
\zeta\to0,\quad
\zeta\in\omega_0.
}\hfill
\end{multline*}
Thus $\hat\Phi_0^{(-1)}(\zeta)$
is the solution of the homogeneous RH 
problem~\ref{hat_Phi_RHP} with $\nu=-1$.
\end{proof}

\section{Large $t$ asymptotics of a special solution
of equations \PItwo\ and \PItwone}

In this section, we construct large $t$ asymptotic 
solution to the RH problem~\ref{nu_RHP} for $\nu=-1$ 
and $\nu=3$ corresponding to a special solution of 
equation \PItwo. This special solution $u(x,t)$ has 
the asymptotics $u\sim\mp\sqrt[3]{6|x|}$ 
as $x\to\pm\infty$ and is real and regular on the real 
line for any $t\in\mathbb R$. The physical importance of this solution for $t=0$ from the point
of view of string theory was justified in \cite{BMP}.
In \cite{D1, D2}, this solution appeared in the study of the problem of universality of
critical behavior of solutions to Hamiltonian perturbations of hyperbolic PDEs.

The above mentioned properties uniquely distinguish this
special solution. In \cite{K}, the characterization of this 
solution in terms of the Stokes multipliers of 
the associated linear system was found,
\begin{equation}\label{tri-truncated_Stokes}
s_{-2}=s_{-1}=s_1=s_2=0,\quad
s_{-3}=s_0=s_3=-i.
\end{equation}

\subsection{Large $t$ asymptotic spectral curve 
for the special solution to \PItwo}

Assume that $t>0$ is large and $x\in{\mathbb C}$ is such that
\begin{equation}\label{t_x_ratio}
x_0:=xt^{-3/2}={\mathcal O}(1)\quad\mbox{as}\quad
t\to+\infty.
\end{equation}

Our starting point is the large $t$ asymptotics
of the spectral curve, $\det(\mu-t^{-5/4}A(\lambda))=0$,
where $\lambda=t^{1/2}\xi$, i.e.
\begin{equation}\label{spectral_curve_p12}
\mu^2=
\tfrac{1}{900}\xi^5
-\tfrac{1}{30}\xi^3
+\tfrac{1}{30}x_0\xi^2
+\tfrac{1}{30}D_1\xi
+\tfrac{1}{30}D_0=
\tfrac{1}{900}
\prod_{k=1}^5(\xi-\xi_k).
\end{equation}
The asymptotic analysis of various degenerated solutions 
of \PItwo\ performed in \cite{GKK}, shows that the topological
properties of the asymptotic spectral curve are significantly 
different in the interior of the domains $D_{\pm}$ and $D_0$, 
see Figure~\ref{fig3a}. For $x_0\in D_{\pm}$, the spectral curve 
has genus~0,
\begin{equation}\label{spectral_curve_genus0}
\mu^2=
\tfrac{1}{900}
(\xi-\xi_i)^2
(\xi-\xi_j)^2
(\xi-\xi_k),
\end{equation}
where the double branch points $\xi_i,\xi_j$ and the simple 
branch point $\xi_k$ satisfy the conditions,
\begin{equation*}
\xi_{i,j}=
-\tfrac{1}{4}\xi_k
\pm\sqrt{15-\tfrac{5}{16}\xi_k^2},\quad
\xi_k^3
-24\xi_k
+48x_0=0,
\end{equation*}
and the ambiguity in choice of the root of the cubic equation
for the simple branch point $\xi_k$ is fixed demanding that, 
for $x_0\in{\mathbb R}$,
\begin{equation*}
\sgn(x_0)\,\Re\Bigl(
\int_{\xi_k}^{\xi_{i,j}}
\mu(z)\,dz
\Bigr)>0,\quad
x_0\to\pm\infty.
\end{equation*}
The conditions above are consistent with the quasi-stationary 
asymptotic behavior of the special solution to \PItwo\ (\ref{p12}),
\begin{multline}\label{quasi-stationary_sol_PI2}
u(x,t)\simeq t^{1/2}v_0,\quad
v_0^3-6v_0+6x_0=0,
\\
v_0\simeq-\sqrt[3]{6x_0},\quad
x_0\to\pm\infty,
\end{multline}
where the real on the real line branch of $\sqrt[3]{6x_0}$
is chosen.

\begin{figure}[htb]
\begin{center}
\mbox{\epsfig{figure=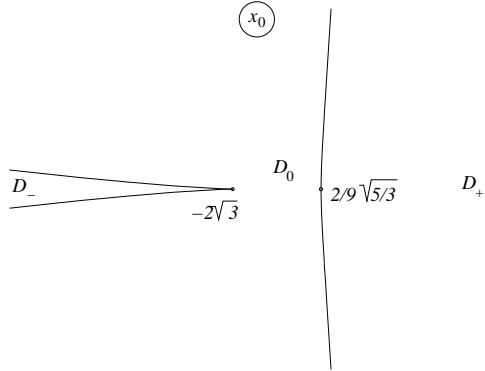,width=0.5\textwidth}}
\end{center}
\caption{The discriminant set for the special solution 
of equation \PItwo. In the interior of the domains $D_+$ and $D_-$, 
the asymptotics of the special solution has genus $g=0$, and in 
the domain $D_0$, it has genus $g=1$.}
\label{fig3a}
\end{figure}

Let us indicate some interesting points in the $x_0$ complex plane.

At the point $x_0=-2\sqrt3=D_0\cap D_-\cap{\mathbb R}$, 
two double branch points of the asymptotic spectral curve 
coalesce, so that $\xi_{1,2,3,4}=-\sqrt3$ and $\xi_5=4\sqrt3$.
Generically, at the asymptotically quadruple branch point, the local 
solution of the RH problem can be approximated using the Garnier--Jimbo--Miwa 
$\Psi$ function for the second Painlev\'e transcendent \PII, \cite{Gar, JMU}. 
For the RH problem we are studying, the relevant local approximate solution
corresponds to a Hastings--McLeod solution to \PII\ \cite{CG2}.

At any $x_0\in D_+$ including its boundary, the spectral curve has one 
simple and two double branch points. The precise asymptotic location of these 
branch points can be found for 
$x_0=\tfrac{2\sqrt5}{9\sqrt3}=D_0\cap D_+\cap{\mathbb R}$, where 
$\xi_3=-\tfrac{4\sqrt5}{\sqrt3}$, $\xi_{1,2}=-\tfrac{\sqrt5}{\sqrt3}$
and $\xi_{4,5}=\tfrac{3\sqrt5}{\sqrt3}$. At this point, as well as at 
any point of $D_+$ including its boundary, the leading order asymptotic
solution of the RH problem can be expressed in elementary functions, 
cf.\ \cite{Cl} for the real line case.

Observe also the point $x_0=2\sqrt3\in D_+$ where two double branch
points coalesce. The asymptotic branch points corresponding to this point 
are $\xi_3=-4\sqrt3$ and $\xi_{1,2,4,5}=\sqrt3$. Generically, the quadruple 
degeneration corresponds to the appearance of \PII. However, for the RH 
problem we consider here, the relevant Painlev\'e function is trivial, and 
the asymptotic solution to the RH problem remains elementary. 

In the interior part of the domain $D_0$, the large $t$ asymptotic solution 
to the RH problem is constructed on the model elliptic curve,
\begin{equation}\label{spectral_curve_genus1}
\mu^2=\tfrac{1}{900}
(\xi-\xi_1)^2
(\xi-\xi_3)(\xi-\xi_4)(\xi-\xi_5),
\end{equation}
where the branch points $\xi_j$, $j=3,4,5$, are determined
by the values of $\xi_1$ and $x_0$ as the roots of the cubic equation
\begin{equation}\label{xi345_via_xi1_eq}
\xi^3
+2\xi_1\xi^2
+(3\xi_1^2-30)\xi
+4\xi_1^3
-60\xi_1
+30x_0=0.
\end{equation}
The double branch point $\xi_1$ is determined
as a function of $x_0\in D_0$ by the system of Boutroux 
equations, see \cite{K2},
\begin{equation}\label{Boutroux_tt}
\Re\int_{\xi_3}^{\xi_4}
\mu(z)\,dz=0,\quad
\Re\int_{\xi_4}^{\xi_5}
\mu(z)\,dz=0,
\end{equation}
supplemented by the described above boundary conditions 
on $D_{\pm}\cap D_0$.

\begin{rem}
As $x_0$ approaches the real segment 
$(-2\sqrt3,\tfrac{2\sqrt5}{9\sqrt3})$,
all asymptotic branch points become real and satisfy the
inequalities
$\xi_3<\xi_1<\xi_4<\xi_5$. Thus the second of the 
equations (\ref{Boutroux_tt}) trivializes while 
the first of these equations turns into the condition 
$\int_{\xi_3}^{\xi_4}\mu(z)\,dz=0$
obtained in \cite{Pot} in the analysis of the Whitham 
equations and used in \cite{Cl} to study the same special 
solution on the real line.
\end{rem}

\subsection{Steepest-descent analysis of the RH problem}

The strategy of the steepest-descent asymptotic analysis
by Deift and Zhou \cite{DZ1, DZ2} of the RH problem involves 
several standard steps: 
1)~transformation of the jump graph to the steepest-descent
directions of a suitable $g$-function; 
2)~construction of the local approximate solutions (parametrices); 
3)~matching all the parametrices into a global parametrix;
4)~proof that the global parametrix indeed approximates
the genuine solution to the original RH problem.

Since all the above mentioned steps are well explained in the 
literature (see e.g.\ \cite{FIKN}), below, we omit unnecessary
details.

\subsubsection{Transformation of the jump graph to 
the steepest-descent directions}

According the steepest-descent strategy, we first 
transform the jump contour for each of the RH problems 
to the steepest-descent graph for the exponential 
$\exp\{\int^{\xi}\mu(z)\,dz\}$,
see Figure~\ref{fig2}.
\begin{figure}[htb]
\begin{center}
\mbox{\epsfig{figure=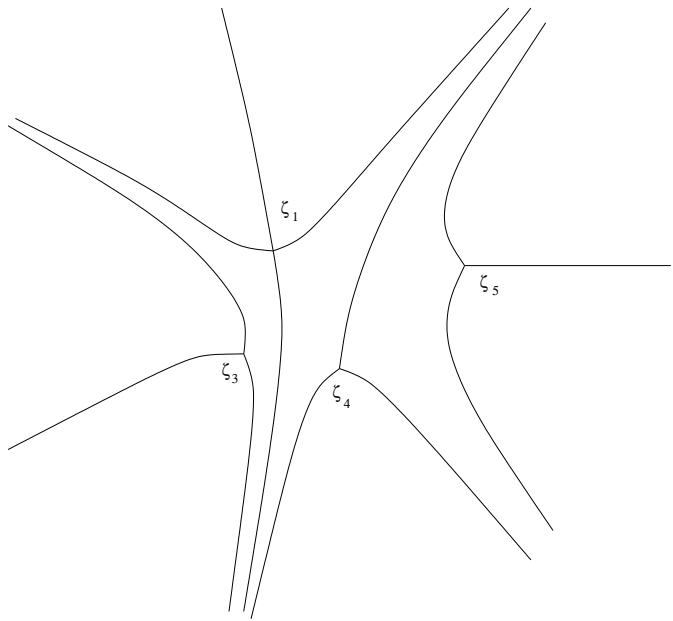,width=0.7\textwidth}}
\end{center}
\caption{Typical anti-Stokes lines for the special solution 
of \PItwo\ as $x_0\in D_0$, $\Im x_0>0$.}
\label{fig2}
\end{figure}

Observe that, in the special case (\ref{tri-truncated_Stokes}),
$s_{\pm2}=s_{\pm1}=0$, $s_0=s_{\pm3}=-i$,
the jump graph depicted on Figure~\ref{fig1}
can be transformed to the one shown in
Figure~\ref{fig3},
\begin{figure}[htb]
\begin{center}
\mbox{\epsfig{figure=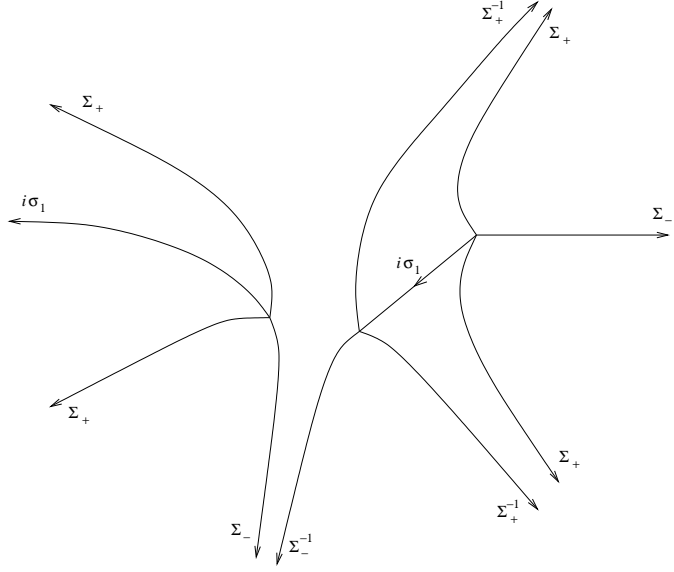,width=0.7\textwidth}}
\end{center}
\caption{The jump graph for the RH problem in the special case
$s_{\pm2}=s_{\pm1}=0$, $s_0=s_{\pm3}=-i$.}
\label{fig3}
\end{figure}
where
\begin{equation}\label{jump_matrices_tt}
\Sigma_-:=
\begin{pmatrix}
1&0\\
-i&1
\end{pmatrix},
\quad
\Sigma_+:=
\begin{pmatrix}
1&-i\\
0&1
\end{pmatrix}.
\end{equation}

\subsubsection{Model elliptic curve and abelian integrals}

The large $t$ asymptotics of $\Psi(\lambda)$ corresponding 
to the Stokes multipliers (\ref{tri-truncated_Stokes}) 
is constructed on the Riemann surface $\Gamma$ of the 
model elliptic curve
\begin{equation}\label{surface_def}
w^2=(\xi-\xi_3)(\xi-\xi_4)(\xi-\xi_5),
\end{equation}
glued of two copies of the complex $\xi$-plane cut along 
$[\xi_5,\xi_4]\cup[\xi_3,-\infty)$,
see Figure~\ref{fig4}.
\begin{figure}[htb]
\begin{center}
\mbox{\epsfig{figure=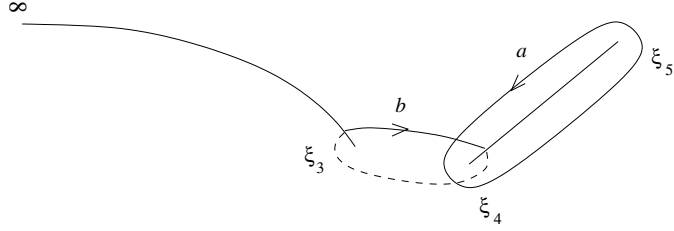,width=0.7\textwidth}}
\end{center}
\caption{The Riemann surface $\Gamma$ and the basis
of cycles $a,b$.}
\label{fig4}
\end{figure}

Define the complete elliptic integrals
\begin{multline}\label{AB_def}
{\mathcal A},{\mathcal B}=
\oint_{a,b}\mu(\xi)\,d\xi
=\tfrac{1}{30}\oint_{a,b}
(\xi-\xi_1)w(\xi)\,d\xi,
\\
\shoveleft{
\omega_{a,b}=\oint_{a,b}\frac{d\xi}{w(\xi)},\quad
\tau=\frac{\omega_b}{\omega_a},\quad
\Im\tau>0,
}\hfill
\end{multline}
and abelian integrals
\begin{equation}\label{hat_g_U_def}
g(\xi)=
\int_{\xi_5}^{\xi}\mu(z)\,dz,\quad
U(\xi)=\frac{1}{\omega_a}\int_{\xi_5}^{\xi}
\frac{dz}{w(z)}.
\end{equation}
We define the integral $g(\xi)$ on the upper sheet of 
the Riemann surface $\Gamma$ cut along the sum of
intervals $[\xi_5,\xi_4]\cup[\xi_4,\xi_3]\cup[\xi_3,-\infty)$.

The Boutroux conditions (\ref{Boutroux_tt}) imply
\begin{equation}\label{Boutroux}
{\mathcal A},{\mathcal B}\in i{\mathbb R}.
\end{equation}

Observe the following properties of $g(\xi)$
and $U(\xi)$:
\begin{enumerate}
\item
as $\xi\to\infty$,
\begin{multline}\label{hat_g_infty_as}
U(\xi)=U_{\infty}+{\mathcal O}(\xi^{-1/2}),\quad
U_{\infty}=-\tfrac{1}{2}\tau,
\\
\shoveleft{
g(\xi)=\vartheta+g_{\infty}
+{\mathcal O}(\xi^{-1/2}),\quad
\vartheta=
\tfrac{1}{105}\xi^{7/2}
-\tfrac{1}{3}\xi^{3/2}
+x_0\xi^{1/2},\quad
g_{\infty}=
-\tfrac{1}{2}{\mathcal B},
}\hfill
\end{multline}
\item
$g(\xi)$ and $U(\xi)$ are discontinuous across the broken
line $[\xi_5,\xi_4]\cup[\xi_4,\xi_3]\cup[\xi_3,-\infty)$
oriented from $\xi_5$ to infinity, moreover
\begin{multline}\label{hat_g_jump}
\xi\in(\xi_5,\xi_4)\colon\quad
g_+(\xi)+g_-(\xi)=0,\quad
U_+(\xi)+U_-(\xi)=0,
\\
\shoveleft{
\xi\in(\xi_4,\xi_3)\colon\quad
g_+(\xi)-g_-(\xi)=-{\mathcal A},\quad
U_+(\xi)-U_-(\xi)=-1,
}\hfill
\\
\shoveleft{
\xi\in(\xi_3,\infty)\colon\quad
g_+(\xi)+g_-(\xi)=-{\mathcal B},\quad
U_+(\xi)+U_-(\xi)=-\tau.
}\hfill
\\
\end{multline}
\end{enumerate}

\subsection{``External'' parametrix}

In this subsection, following \cite{FIKN}, we solve 
the permutation RH problem on the segments
$[\xi_5,\xi_4]\cup[\xi_3,-\infty)$ 
whose solution gives a leading order contribution to 
the solution of the above RH problem.

\begin{RHP}\label{quasi_permutation_RHP}
Given $t\gg1$ and the complex value of the parameter $x_0$,
find a piece-wise holomorphic $2\times2$ 
matrix function $\Phi^{(\nu)}(\xi)$ with the following properties:
\begin{equation}\label{PhiBA_as_def}
\hskip-0.in
(1)\qquad
\lim_{\xi\to\infty}
\tfrac{1}{\sqrt2}(\sigma_3+\sigma_1)
\xi^{-\frac{\nu}{4}\sigma_3}t^{-\frac{\nu}{8}\sigma_3}
\Phi^{(\nu)}(\xi)
e^{-t^{7/4}\vartheta\sigma_3}=I,
\end{equation}
where
\begin{equation}\label{vartheta_new_def}
\vartheta=\tfrac{1}{105}\xi^{7/2}
-\tfrac{1}{3}\xi^{3/2}
+x_0\xi^{1/2};
\end{equation}
across the union of segments $(\xi_5,\xi_4)\cup(\xi_3,-\infty)$ 
oriented as indicated, the jump condition holds true,
\begin{equation}\label{PsiBA_jump_rel}
\Phi^{(\nu)}_+(\xi)=\Phi^{(\nu)}_-(\xi)i\sigma_1,\quad
\xi\in(\xi_5,\xi_4)\cup(\xi_3,-\infty)
\end{equation}
where $\Phi^{(\nu)}_+(\xi)$ and $\Phi^{(\nu)}_-(\xi)$ are limits of 
$\Phi^{(\nu)}(\xi)$ on the segments from the left and from the right,
respectively, see Figure~\ref{fig3}.
\end{RHP}

We do not impose any conditions on the behavior of $\Phi^{(\nu)}(\xi)$
at the points $\xi=\xi_j$, $j=3,4,5$. As the result, the solution of 
the model RH problem~\ref{quasi_permutation_RHP} is 
determined up to a left rational matrix multiplier with possible 
poles at $\xi=\xi_j$, $j=3,4,5$, and certain asymptotics at infinity. 
Below, we will use this rational multiplier to prove or disprove the asymptotic 
solvability of the original RH problem~\ref{nu_RHP}.

\subsubsection{$\beta$-factor}
Consider the principal branches of the functions $\beta_{\nu}(\xi)$,
\begin{align}\label{beta1_def}
&\beta_{-1}(\xi)=(\xi-\xi_3)^{-1/4}(\xi-\xi_4)^{1/4}(\xi-\xi_5)^{-1/4}
\\
\label{beta2_def}
&\beta_{3}(\xi)=(\xi-\xi_3)^{1/4}(\xi-\xi_4)^{1/4}(\xi-\xi_5)^{1/4}
\end{align}
both defined on the $\xi$ complex plane with cuts along the broken line
$[\xi_5,\xi_4]\cup[\xi_4\xi_3]\cup[\xi_3,-\infty)$. These functions
solve the scalar RH problems:
\begin{enumerate}
\item
$\beta_{\nu}(\xi)=\xi^{\frac{\nu}{4}}(1+{\mathcal O}(\xi^{-1}))$,
as $\xi\to\infty$;
\item
the discontinuity of $\beta_{\nu}(\xi)$ across the oriented contour
$[\xi_5,\xi_4]\cup[\xi_4\xi_3]\cup[\xi_3,-\infty)$ 
is described by the conditions
\begin{multline}\label{beta1_jumps}
\xi\in(\xi_5,\xi_4)\colon\quad
\beta_{-1}^+(\xi)=i\beta_{-1}^-(\xi),\quad
\beta_3^+(\xi)=-i\beta_3^-(\xi),
\\
\shoveleft{
\xi\in(\xi_4,\xi_3)\colon\quad
\beta_{-1}^+(\xi)=\beta_{-1}^-(\xi),\quad
\beta_3^+(\xi)=-\beta_3^-(\xi),
}\hfill
\\
\shoveleft{
\xi\in(\xi_3,-\infty)\colon\quad
\beta_{-1}^+(\xi)=i\beta_{-1}^-(\xi),\quad
\beta_3^+(\xi)=i\beta_3^-(\xi).
}\hfill
\end{multline}
\end{enumerate}

\subsubsection{The Riemann theta function and 
the Baker--Akhiezer functions}

Define one more function,
\begin{equation}\label{h_nu_def}
h_{\nu}(\xi)=
t^{7/4}(g(\xi)-g_{\infty})
+\delta_{\nu}(U(\xi)-U_{\infty}),\quad
g_{\infty}=-\tfrac{1}{2}{\mathcal B},\quad
U_{\infty}=-\tfrac{1}{2}\tau,
\end{equation}
where the parameter $\delta_{\nu}$ is defined by
\begin{equation}\label{delta_nu_def}
\delta_{-1}=-t^{7/4}{\mathcal A},\quad
\delta_{3}=-t^{7/4}{\mathcal A}+i\pi.
\end{equation}
This function $h_{\nu}(\xi)$ has the following obvious properties,

(1) as $\xi\to+\infty$,
\begin{equation}\label{h_nu_infty_as}
h_{\nu}(\xi)=
t^{7/4}\vartheta
+{\mathcal O}(\xi^{-1/2});
\end{equation}

(2) $h_{\nu}(\xi)$ is discontinuous across the broken
line $[\xi_5,\xi_4]\cup[\xi_4,\xi_3]\cup[\xi_3,-\infty)$
oriented from $\xi_5$ to infinity, moreover
\begin{multline}\label{h_nu_jump}
\xi\in(\xi_5,\xi_4)\colon\quad
h_{\nu}^+(\xi)+h_{\nu}^-(\xi)
=t^{7/4}{\mathcal B}
+\delta_{\nu}\tau,
\\
\shoveleft{
\xi\in(\xi_4,\xi_3)\colon\quad
h_{\nu}^+(\xi)-h_{\nu}^-(\xi)=
-t^{7/4}{\mathcal A}-\delta_{\nu},
}\hfill
\\
\shoveleft{
\xi\in(\xi_3,\infty)\colon\quad
h_{\nu}^+(\xi)+h_{\nu}^-(\xi)=0.
}\hfill
\end{multline}

Using the Riemann theta-function,
 $\Theta(z)=\sum_ne^{\pi in^2\tau+2\pi i nz}$,
define the matrix function $\Phi_{\nu}^{(BA)}(\xi)$,
\begin{multline}\label{Phi_BA_def}
\Phi_{\nu}^{(BA)}(\xi)=
(\beta_{\nu}(\xi))^{\sigma_3}
\tfrac{1}{\sqrt{2}}
\times
\\
\times
\begin{pmatrix}
\tfrac{\Theta(U(\xi)+V+\phi_{\nu})}
{\Theta(U(\xi)+\frac{1+\tau}{2})}
c_1(\xi)&
\tfrac{\Theta(-U(\xi)+V+\phi_{\nu})}
{\Theta(-U(\xi)+\frac{1+\tau}{2})}
c_1^*(\xi)
\\
\tfrac{\Theta(U(\xi)+V+\phi_{\nu}-\frac{1+\tau}{2})}
{\Theta(U(\xi))}
c_2(\xi)&
\tfrac{\Theta(-U(\xi)+V+\phi_{\nu}-\frac{1+\tau}{2})}
{\Theta(-U(\xi))}
c_2^*(\xi)
\end{pmatrix}
e^{h_{\nu}(\xi)\sigma_3}.
\end{multline}
Here the parameters $V$, $\phi_{\nu}$ and the factors 
$c_j(\xi)$, $c_j^*(\xi)$ are defined by
\begin{multline}\label{V_phi_nu_cj_cj*_def}
V=-\tfrac{1}{2\pi i}t^{7/4}(\tau{\mathcal A}-{\mathcal B});
\\
\shoveleft
\phi_{-1}=\tfrac{1+\tau}{2},\quad
\phi_3=0;
\\
\shoveleft
c_1(\xi)=
\tfrac{\Theta(\frac{1}{2})}{\Theta(V+\phi_{\nu}-\frac{\tau}{2})},\quad
c_1^*(\xi)=\tfrac{\Theta(\frac{1}{2}+\tau)}
{\Theta(V+\phi_{\nu}+\frac{\tau}{2})}
\quad\mbox{if}\quad
V+\phi_{\nu}\neq\tfrac{1}{2}+n+m\tau,
\\
\shoveleft
c_1(\xi)=\beta_{-1}^{-2}(\xi)
\tfrac{\omega_a\Theta(\frac{1}{2})}{2\Theta'(\frac{1+\tau}{2})},\quad
c_1^*(\xi)=\beta_{-1}^{-2}(\xi)
\tfrac{\omega_a\Theta(\frac{1}{2}+\tau)}
{2\Theta'(\tfrac{1+\tau}{2})}
\quad\mbox{if}\quad
V+\phi_{\nu}=\tfrac{1}{2},\hfill
\\
c_2(\xi)=\tfrac{\Theta(\frac{\tau}{2})}
{\Theta(V+\phi_{\nu}-\frac{1}{2}-\tau)},\quad
c_2^*(\xi)=\tfrac{\Theta(\frac{\tau}{2})}
{\Theta(V+\phi_{\nu}-\frac{1}{2})}
\quad\mbox{if}\quad
V+\phi_{\nu}\neq\tfrac{\tau}{2}+n+m\tau,
\\
\shoveleft
c_2(\xi)=\beta_{-1}^{-2}(\xi)
\tfrac{\omega_a\Theta(\frac{\tau}{2})}
{2\Theta'(\frac{1+\tau}{2})},\quad
c_2^*(\xi)=\beta_{-1}^{-2}(\xi)
\tfrac{\omega_a\Theta(\frac{\tau}{2})}
{2\Theta'(\frac{1+\tau}{2})}
\quad\mbox{if}\quad
V+\phi_{\nu}=\tfrac{\tau}{2},\hfill
\\
n,m\in{\mathbb Z}.
\end{multline}
It can be shown that $\det\Phi_{\nu}^{(BA)}(\xi)\equiv-1$,
the function $\Phi_{\nu}^{(BA)}(\xi)$ (\ref{Phi_BA_def})
satisfies (\ref{PhiBA_as_def}) and (\ref{PsiBA_jump_rel})
and thus is one of the solutions of the 
RH problem~\ref{quasi_permutation_RHP}. 
Any other solution to this RH problem has the form of the product
\begin{equation}\label{PsiBA_gen_sol}
\Psi_{\nu}^{(BA)}(\xi)=R_{\nu}(\xi)\Phi_{\nu}^{(BA)}(\xi),
\end{equation}
where $R_{\nu}(\xi)$ is rational with poles at $\xi=\xi_j$, $j=3,4,5$,
and satisfies the asymptotic condition 
$R_{\nu}(\xi)=I+{\mathcal O}(\xi^{-1})$
as $\xi\to\infty$.

\subsection{Local RH problem solution near the branch points
$\xi=\xi_j$, $j=5,4,3$}

As it is well known, near a single branch points, the relevant
boundary-value problem can be solved using the classical Airy 
functions, see e.g.\ \cite{FIKN}.

\subsubsection{RH problem for the Airy functions}

Define the Wronsky matrix of the Airy functions
\cite{BE},
\begin{equation}\label{Airy_matrix}
Z_0(z)=\sqrt{2\pi}e^{-i\pi/4}
\begin{pmatrix}
v_2(z)&v_1(z)\\
\frac{d}{dz}v_2(z)&\frac{d}{dz}v_1(z)
\end{pmatrix}
e^{-i\pi\sigma_3/4},
\end{equation}
where
\begin{equation}\label{v12_def}
v_1(z)=\Ai(z),\quad
v_2(z)=e^{i2\pi/3}\Ai(e^{i2\pi/3}z).
\end{equation}
Besides $Z_0(z)$, introduce auxiliary functions
\begin{multline}\label{Airyj_def}
Z_{-1}(z)=Z_0(z)(\Sigma_+)^{-1},\quad
Z_1(z)=Z_0(z)\Sigma_-,\quad
Z_2(z)=Z_1(z)\Sigma_+,
\\
\Sigma_+=
\begin{pmatrix}
1&-i\\
0&1
\end{pmatrix},\quad
\Sigma_-=
\begin{pmatrix}
1&0\\
-i&1
\end{pmatrix}.
\end{multline}
By construction \cite{BE},
\begin{multline}\label{Airy_matrix_as}
Z_j(z)=
z^{-\sigma_3/4}
\tfrac{1}{\sqrt2}(\sigma_3+\sigma_1)
(I+{\mathcal O}(z^{-\frac{3}{2}}))
e^{\frac{2}{3}z^{3/2}\sigma_3},
\\
z\to\infty,\quad
z\in\omega_j=\bigl\{
z\in{\mathbb C}\colon\
\arg z\in\bigl(-\pi+\tfrac{2\pi}{3}j,\tfrac{\pi}{3}+\tfrac{2\pi}{3}j)
\bigr\}.
\end{multline}

Assemble the piece-wise holomorphic functions $Z^{(j)}(z)$,
$j=5,4,3$,
\begin{equation}\label{Zj_RH_def}
Z^{(j)}(z)=
\begin{cases}
Z_{-1}(z)G_j,\quad
\arg z\in(-\pi,-\tfrac{2\pi}{3}),
\\
Z_{0}(z)G_j,\quad
\arg z\in(-\tfrac{2\pi}{3},0),
\\
Z_{1}(z)G_j,\quad
\arg z\in(0,\tfrac{2\pi}{3}),
\\
Z_{2}(z)G_j,\quad
\arg z\in(\tfrac{2\pi}{3},\pi),
\end{cases}
\end{equation}
where
\begin{equation*}
G_5=G_3=
I,\quad
G_4=
i\sigma_3.
\end{equation*}

Observing that the jump matrices of $Z^{(j)}(z)$ coincide with those
across the lines emanating from the node points
$\xi_j$, $j=5,4,3$, in Figure~\ref{fig3}, we are ready 
to construct the relevant local parametrices,
\begin{equation}\label{parametrix_j_def}
\Psi_{\nu}^{(j)}(\xi)=B^{(\nu)}_j(\xi)Z^{(j)}(z^{(j)}(\xi)),\quad
j=5,4,3,\quad
\nu=-1,3.
\end{equation}
Here $B^{(\nu)}_j(\xi)$, $j=5,4,3$, are holomorphic in some finite
neighborhoods of $\xi=\xi_j$ matrices, and $z=z^{(j)}(\xi)$ are
changes of variables biholomorphic in some finite neighborhoods of 
$\xi=\xi_j$.

\subsubsection{Determination of the local change $z=z^{(j)}(\xi)$}

This biholomorphic change of variables has to be chosen in a way 
to ensure that the global parametrix, see below, has small enough 
jumps as $t\to+\infty$, namely it must satisfy the condition
\begin{equation*}
\tfrac{2}{3}(z^{(j)}(\xi))^{3/2}=
t^{7/4}(g(\xi)-g(\xi_j))
+\delta_{\nu}(U(\xi)-U(\xi_j))
+o(1),\quad
|\xi-\xi_j|=const.
\end{equation*}

The biholomorphicity condition is satisfied with the choice 
\begin{equation}\label{z_j_from_xi_change}
z^{(j)}(\xi)=
t^{7/6}
\Bigl(
\tfrac{3}{2}
\int_{\hat\xi_j}^{\xi}
\hat\mu(z)\,dz
\Bigr)^{2/3},\quad
j=5,4,3,
\end{equation}
where $\hat\mu(\xi)$ has the form (\ref{spectral_curve_p12}),
(\ref{spectral_curve_genus1}) with the branch points $\hat\xi_j$,
%
\begin{multline*}
\hat\mu^2=
\tfrac{1}{900}\xi^5
-\tfrac{1}{30}\xi^3
+\tfrac{1}{30}x_0\xi^2
+\tfrac{1}{30}\hat D_1\xi
+\tfrac{1}{30}\hat D_0=
\\
=\tfrac{1}{900}
(\xi-\hat\xi_1)^2
(\xi-\hat\xi_3)(\xi-\hat\xi_4)(\xi-\hat\xi_5).
\end{multline*}
%
The unique abelian differential holomorphic on the elliptic curve
$\hat w^2=(\xi-\hat\xi_3)(\xi-\hat\xi_4)(\xi-\hat\xi_5)$ is 
\begin{equation*}
d\hat U=\tfrac{2}{\hat\omega_a}
\Bigl(\tfrac{\partial}{\partial\hat D_1}
-\hat\xi_1\,\tfrac{\partial}{\partial\hat D_0}
\Bigr)\hat\mu(\xi)\,d\xi.
\end{equation*}
Thus the 1-parameter deformation of the model degenerated curve 
that respects the degeneration is generated by the vector field
\begin{equation}\label{1-parameter_vector_field}
\tfrac{\partial}{\partial D}=
\tfrac{2}{\hat\omega_a}
\Bigl(\tfrac{\partial}{\partial\hat D_1}
-\hat\xi_1\,\tfrac{\partial}{\partial\hat D_0}
\Bigr).
\end{equation}
Finally, we find the elliptic curve satisfying the asymptotic 
conditions,
\begin{multline}\label{AB1tau_to_hatAhatB}
\mu(\xi)+t^{-7/4}\frac{\delta_{\nu}}{w(\xi)}=
\hat\mu(\xi)+{\mathcal O}(t^{-7/2}(\xi-\xi_j)^{-3/2}),
\\
\mbox{as}\quad
|\xi-\xi_1|>C_1t^{-7/8},\quad
|\xi-\xi_j|>C_jt^{-7/4},\quad
C_j=const,\quad
j=3,4,5,
\\
\shoveleft{
\omega_{a,b}=\hat\omega_{a,b}+{\mathcal O}(t^{-7/2}),\quad
\hat\tau=\tau+{\mathcal O}(t^{-7/2}),
}\hfill
\\
{\mathcal A}+t^{-7/4}\delta_{\nu}=
\hat{\mathcal A}+{\mathcal O}(t^{-7/2}),\quad
{\mathcal B}+t^{-7/4}\tau\delta_{\nu}=
\hat{\mathcal B}+{\mathcal O}(t^{-7/2}).
\end{multline}

\subsection{Global parametrix}

The global approximate solution to the RH problem for $\Phi^{(\nu)}(\xi)$,
$\nu=-1,3$,
is a piece-wise analytic matrix function $\tilde\Psi_{\nu}(\xi)$ defined as follows,
\begin{multline}\label{global_param_def}
\tilde\Psi_{\nu}(\xi)=
\begin{cases}
\Psi_{\nu}^{(j)}(\xi),\quad
\xi\in C_j,\\
\Psi_{\nu}^{(BA)},\quad
\xi\in{\mathbb C}\backslash\cup_jC_j,
\end{cases}
\\
\Psi_{\nu}^{(BA)}(\xi)=R_{\nu}(\xi)\Phi^{(BA)}(\xi),\quad
C_j=\{\xi\in{\mathbb C}\colon|\xi-\xi_j|<r\},
\\
0<r<\tfrac{1}{2}\min_{k\neq j}|\xi_k-\xi_j|,\quad
k,j=3,4,5,
\end{multline}
see Figure~\ref{fig13}.
\begin{figure}[htb]
\begin{center}
\mbox{\epsfig{figure=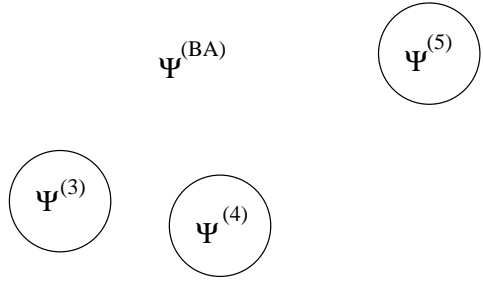,width=0.5\textwidth}}
\end{center}
\caption{The construction of the global parametrix.}
\label{fig13}
\end{figure}

The exact solution is constructed using the correction
function $\chi(\xi)$,
\begin{equation}\label{chi_for_Psi_def}
\chi(\xi)
=\Psi(\xi)
\tilde\Psi^{-1}(\xi),
\end{equation}
that satisfies the following RH problem:

(1) the limit
\begin{equation*}
\lim_{\xi\to\infty}
\xi^{1/2}\bigl(
\tfrac{1}{\sqrt2}(\sigma_3+\sigma_1)
\xi^{\frac{1}{4}\sigma_3}
t^{\frac{1}{8}\sigma_3}
\chi(\xi)
t^{-\frac{1}{8}\sigma_3}
\xi^{-\frac{1}{4}\sigma_3}
\tfrac{1}{\sqrt2}(\sigma_3+\sigma_1)
-I
\bigr)
\end{equation*}
exists and is diagonal;

(2) across the contour $\gamma$ shown in Figure~\ref{fig14},
the jump condition holds, $\chi_+(\xi)=\chi_-(\xi)H(\xi)$
where
\begin{equation*}
H(\xi)=\tilde\Psi_-(\xi)S(\xi)\tilde\Psi_+^{-1}(\xi).
\end{equation*}
\begin{figure}[htb]
\begin{center}
\mbox{\epsfig{figure=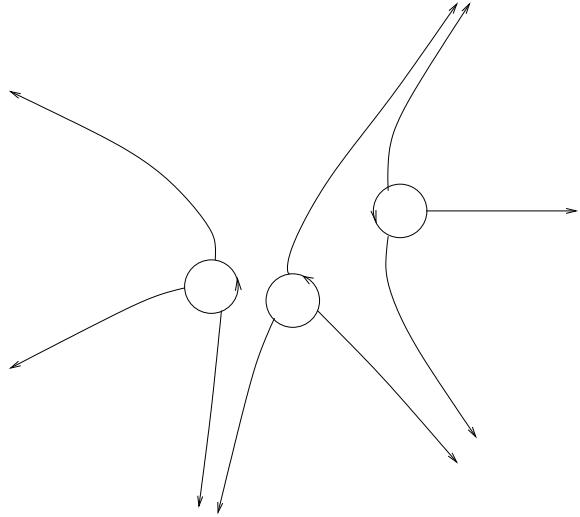,width=0.6\textwidth}}
\end{center}
\caption{The jump contour $\gamma$ for the correction function $\chi(\xi)$.}
\label{fig14}
\end{figure}

To apply the $L^2$-theory to the latter RH problem \cite{Zh}, all jump matrices 
$H(\xi)$ across the jump contour $\gamma$ in Figure~\ref{fig14} must satisfy 
the estimate $\|H(\xi)-I\|_{L^2(\gamma)\cap C(\gamma)}=o(1)$ as $t\to\infty$. 
For the infinite tails emanating from the circles, this fact holds true because 
the relevant jumps are uniformly exponentially small. 

The jumps across the circles centered at the branch points 
$\xi_j$, $j=3,4,5$, can be made small as $t$ is large if one adjusts 
the rational matrix $R_{\nu}(\xi)$, $\nu=-1,3$, and the holomorphic matrices 
$B_j^{(\nu)}(\xi)$, $\nu=-1,3$, $j=5,4,3$, in an appropriate way. Omitting 
the straightforward but tedious computations, we present the result:
\begin{multline}\label{Rm1_sol}
V-\tfrac{1}{2},
V-\tfrac{\tau}{2},
V-\tfrac{1+\tau}{2}\neq
n+m\tau,\quad
n,m\in{\mathbb Z}\colon
\\
R_{-1}(\xi)=
t^{-\frac{1}{8}\sigma_3}
\begin{pmatrix}
1&\frac{1}{\xi-\xi_5}q^{(\infty)}
\\
r^{(\infty)}&
1+\frac{1}{\xi-\xi_5}q^{(\infty)}r^{(\infty)}
\end{pmatrix},
\\
q^{(\infty)}=
-\tfrac{\omega_a}{2}
(\xi_5-\xi_4)
\tfrac{e^{2\pi iV(t)}
\Theta(V+\frac{1+\tau}{2})
\Theta(V+\frac{\tau}{2})
\Theta(\frac{1}{2})\Theta(0)}
{\Theta(V+\frac{1}{2})\Theta(V)
\Theta(\frac{\tau}{2})\Theta'(\frac{1+\tau}{2})},
\\
r^{(\infty)}-q^{(\infty)}=
\tfrac{2\Theta(V)\Theta(V+\frac{1+\tau}{2})
\Theta'(\frac{1+\tau}{2})\Theta(0)}
{\omega_a\Theta(V+\frac{1}{2})\Theta(V+\frac{\tau}{2})
\Theta(\frac{1}{2})\Theta(\frac{\tau}{2})},
\\
\shoveleft
V=\tfrac{1}{2}\colon\quad
R_{-1}(\xi)=t^{-\frac{1}{8}\sigma_3}
\Bigl(\frac{\xi-\xi_5}{\xi-\xi_4}\Bigr)^{\sigma_3},
\\
\shoveleft{
V=\tfrac{\tau}{2}\colon\quad
R_{-1}(\xi)=t^{-\frac{1}{8}\sigma_3}.
}\hfill
\end{multline}
The relevant correction function satisfies the estimates
\begin{equation}\label{chi_estim}
\chi(\xi)-I=
\begin{cases}
{\mathcal O}(t^{-7/24}),\quad
|\xi|<\mbox{const},
\\
{\mathcal O}(t^{-7/24}\xi^{-1}),\quad
|\xi|\to\infty,
\end{cases}
\end{equation}
in the domain of the parameter $x_0=xt^{-3/2}$ described by 
\begin{equation}\label{V_bad}
|V-\tfrac{1+\tau}{2}-n-m\tau|>Ct^{-\frac{7}{24}+\epsilon},\quad
n,m\in{\mathbb Z},\quad
\epsilon=\mbox{const}>0.
\end{equation}

\subsection{Large $t$ asymptotics of the RH 
problem~\ref{nu_RHP}($\nu=3$)}

In contrast, the computation of the left rational multiplier
$R_{\nu}(\xi)$ for $\nu=3$ yields 
\begin{equation}\label{R3_sol}
R_3(\xi)=
t^{\frac{3}{8}\sigma_3}I,
\end{equation}
together with the additional condition,
\begin{equation}\label{R3_exists_constraint}
\Theta(V)=0,
\end{equation}
or, equivalently,
\begin{equation}\label{V_for_poles}
V=\tfrac{1+\tau}{2}+n+m\tau.
\end{equation}
In this case, the ``external'' parametrix is elementary,
\begin{equation}\label{Phi_BA2_1+tau/2_def}
\Phi_3^{(BA)}(\xi)=
(\beta_3(\xi))^{\sigma_3}
\tfrac{1}{\sqrt{2}}
(\sigma_3+\sigma_1)
e^{h_3(\xi)\sigma_3},
\end{equation}
and the correction function $\chi(\xi)$
satisfies the estimates (\ref{chi_estim}).

\begin{rem}
The above computation shows that, as $t\to+\infty$, 
for each value of the deformation parameter $x$, $xt^{-3/2}\in D_0$,
the RH problem~\ref{nu_RHP} is solvable either with $\nu=-1$ or $\nu=3$.
Thus there is no room for the solvability of the RH problem with 
$\nu=-5$. Therefore, at least in the large $t$ limit, the special solution 
to \PItwo\ has no triple pole collisions and the corresponding solutions to 
\PItwone\ have no branch points. The relevant Malgrange divisor 
consists of smooth branches only.
\end{rem}


\section{Pole asymptotic distribution}

To compute the large $t$ asymptotic distribution of poles
of the special solution to \PItwo,
we use the phase shift given in (\ref{V_for_poles}) and 
the definition of $V$ in (\ref{V_phi_nu_cj_cj*_def}),
\begin{equation}\label{pole_distribution_as}
\tfrac{1}{2\pi i}t^{7/4}(\tau{\mathcal A}-{\mathcal B})=
n+\tfrac{1}{2}
+(m+\tfrac{1}{2})\tau.
\end{equation}
Using the canonical dissection of the Riemann surface,
the difference $\omega_b{\mathcal A}-\omega_a{\mathcal B}$ 
is expressed in terms of a single contour integral over the contour 
${\mathcal L}$ depicted in Figure~\ref{fig15},
\begin{figure}[htb]
\begin{center}
\mbox{\epsfig{figure=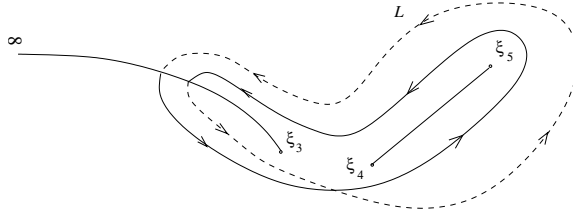,width=0.6\textwidth}}
\end{center}
\caption{The contour for the canonical dissection of the elliptic curve $\Gamma$.}
\label{fig15}
\end{figure}
\begin{equation*}
\omega_b{\mathcal A}-\omega_a{\mathcal B}=
\omega_a\oint_{\mathcal L}U(z)\mu(z)\,dz.
\end{equation*}
Inflating the contour ${\mathcal L}$, we transform it to a contour
encircling the infinite point which is a branch point of the curve.
Then expanding the integrand at the infinity
and using the residue theorem, we find
\begin{equation}\label{tau_A-B_sol}
\tau{\mathcal A}-{\mathcal B}=
\oint_{\mathcal L}(U(z)-U_{\infty})\mu(z)\,dz
=-\tfrac{8\pi i}{7\omega_a}
(3x_0+\tfrac{2}{3}\xi_1),
\end{equation}
and the pole distribution formula (\ref{pole_distribution_as})
yields
\begin{equation}\label{pole_distribution_as_fin}
3x_0+\tfrac{2}{3}\xi_1=
\tfrac{7}{4}t^{-7/4}
\bigl(
(n+\tfrac{1}{2})\omega_a
+(m+\tfrac{1}{2})\omega_b
\bigr).
\end{equation}
Since $\xi_1$ is determined by $x_0$ via the Boutroux equations
(\ref{Boutroux_tt}), equation (\ref{pole_distribution_as_fin}) 
determines the pole position,
$x_0^{(n,m)}=x_{n,m}t^{-3/2}$ as a transcendent function
of two integers $(n,m)$. As $t\to+\infty$, the particular pole 
$x_0^{(n,m)}$ approaches the attracting point satisfying
the equation
\begin{equation}\label{attractor}
\xi_1(x_0)=-\tfrac{9}{2}x_0.
\end{equation}

\subsection{Quasi-stationary solutions of equation \PItwone}

Although it is not known how to solve the transcendent equation 
(\ref{attractor}), the problem of finding the attractor to the pole 
distribution significantly simplifies observing that the pole
attractors correspond to the quasi-stationary solutions 
of equation \PItwone,
\begin{equation}\label{a_quasi_stat_as}
a(t)=a_{\infty}t^{3/2}(1+{\mathcal O}(t^{-\epsilon})),\quad
\epsilon>0.
\end{equation}
Substituting (\ref{a_quasi_stat_as}) into \PItwone\ 
(\ref{a_ODE_from_KDV}), we find an algebraic equation
for the parameter $a_{\infty}$,
\begin{equation}\label{a_infty_eq}
a_{\infty}^4-\tfrac{236}{243} a_{\infty}^2+\tfrac{160}{2187}=0,
\end{equation}
with the roots
$a_{\infty}\in\{\pm\tfrac{2\sqrt5}{9\sqrt3},\pm\tfrac{2\sqrt2}{3}\}$.
However only one of the roots,
\begin{equation}\label{a_infty_sol}
a_{\infty}=\tfrac{2\sqrt5}{9\sqrt3},
\end{equation}
is consistent with the described above properties of the large $t$
asymptotic spectral curve
for the special solution of \PItwo. The linearization of equation
\PItwone at the 0-parameter power series solution with the leading order 
term coefficient (\ref{a_infty_sol}) has four linearly independent solutions. 
Two of them are exponential, 
$\sim\exp[\pm i\tfrac{2\sqrt2}{3}(\tfrac{5}{3})^{3/4}\tfrac{4}{7}t^{7/4}]$,
and we set them aside. Two other solutions of the linearized equation, 
$\sim t^{-1/4}$ and $\sim t^{-1/4}\ln t$, are relevant to our quasi-stationary
behavior of the poles. Using them, we form the 2-parameter series
\begin{multline}\label{quasi_stat_series}
a(t)=t^{3/2}
\sum_{k=0}^{\infty}
(t^{-\frac{7}{4}}\ln(t^{-\frac{7}{4}}))^k
a_k(t),\quad
a_k(t)=\sum_{l=0}^{\infty}a_{kl}t^{-\frac{7}{4}l},
\\
\shoveleft
a_{00}=\tfrac{2\sqrt5}{9\sqrt3},
\\
\shoveleft
a_{10},a_{01}\in{\mathbb C}\quad
\mbox{are arbitrary},
\\
\shoveleft
a_{20}=-\tfrac{3\sqrt{15}}{784} a_{10}^2,\quad
a_{11}=-\tfrac{3\sqrt{15}}{392}(a_{01}-2a_{10})a_{10},
\\
\hfill
a_{02}=\tfrac{3\sqrt{3}
(\sqrt{15}-50a_{01}^2+200 a_{01}a_{10}+3130a_{10}^2)}
{7840\sqrt{5}},
\\
\shoveleft
a_{30}=\tfrac{1467}{307328}a_{10}^3,\quad
a_{21}=\tfrac{27}{307328}(163 a_{01}+861a_{10})a_{10}^2,
\\
a_{12}=\tfrac{27}{9834496}(1907\tfrac{\sqrt{3}}{\sqrt{5}}+5216 a_{01}^2
+55104 a_{01} a_{10}+233392 a_{10}^2) a_{10},
\\
\hfill
a_{03}=\tfrac{9}{3073280}
(1630 a_{01}^3
+25830 a_{01}^2a_{10}
+218805 a_{01} a_{10}^2
+788362 a_{10}^3)
\\
\hfill
+\tfrac{9\sqrt{3}}{49172480\sqrt{5}}
(28605 a_{01}+258143 a_{10}),\quad
\\
\shoveleft{
\dots
}\hfill
\end{multline}
Let us relate the free parameters $a_{01},a_{10}$ in 
(\ref{quasi_stat_series}) with the integers $n$ and $m$ in 
(\ref{pole_distribution_as_fin}).
Recall that, along the boundary $\partial D_+$, two branch points 
$\xi_4$ and $\xi_5$ of the asymptotic spectral curve coalesce. 
Namely, if $t\to+\infty$, the limiting 
values corresponding to the attracting point are
\begin{equation}\label{attractor_branch_points}
x_0^*=\tfrac{2\sqrt5}{9\sqrt3},\quad
\xi_3^*=-\tfrac{4\sqrt5}{\sqrt3},\quad
\xi_{1,2}^*=-\tfrac{\sqrt5}{\sqrt3},\quad
\xi_{4,5}^*=\sqrt{15}.
\end{equation}
The asymptotics of the branch points in the model elliptic spectral 
curve compatible with the expansion (\ref{quasi_stat_series}) is given by
\begin{multline}\label{model_xi12345}
\xi_1=\xi_2=-\tfrac{\sqrt5}{\sqrt3}
+\tfrac{3}{4}
(a_{01}
+7a_{10}
+a_{10}\ln( t^{-7/4})
)t^{-7/4}
+{\mathcal O}(t^{-7/2}\ln^2t),
\\
\shoveleft
\xi_3=-\tfrac{4\sqrt5}{\sqrt3}
-\tfrac{6}{7}
\bigl(
a_{01}
+4a_{10}
+a_{10}\ln( t^{-7/4})
\bigr)t^{-7/4}
+{\mathcal O}(t^{-7/2}\ln^2t),\quad
\\
\shoveleft
\xi_{4,5}=\sqrt{15}
\mp\sqrt6\sqrt[4]{15}\sqrt{a_{10}}\,t^{-7/8}
\\
-\tfrac{9}{28}
\bigl(
a_{01}
+11a_{10}
+a_{10}\ln( t^{-7/4})
\bigr)t^{-7/4}
+{\mathcal O}(t^{-21/8}\ln^2t).
\end{multline}
The asymptotics of the periods $\omega_{a,b}$ as
$\xi_5-\xi_4\to0$ have the forms
\begin{multline}\label{omega_ab_deg_as}
\omega_a=
\tfrac{2\pi i}{\sqrt{\xi_4-\xi_3}}
(1-\tfrac{\xi_5-\xi_4}{4(\xi_4-\xi_3)}+{\mathcal O}((\xi_5-\xi_4)^2)),
\\
\shoveleft
\omega_b=
\tfrac{2}{\sqrt{\xi_4-\xi_3}}
\ln\tfrac{\xi_5-\xi_4}{16(\xi_4-\xi_3)}
+{\mathcal O}((\xi_5-\xi_4)\ln(\xi_5-\xi_4)),\quad
\xi_5-\xi_4\to0.
\end{multline}
Thus, for large $t$,
\begin{multline}\label{omega_ab_att_as}
\omega_a=i\pi\tfrac{2\sqrt[4]{3}}{\sqrt{7}\sqrt[4]{5}}
(1+{\mathcal O}(t^{-7/8})),
\\
\shoveleft
\omega_b=
-\tfrac{\sqrt[4]{3}\sqrt{7}}{4\sqrt[4]{5}}
\ln t
+\tfrac{2\sqrt[4]3}{\sqrt{7}\sqrt[4]{5}}
\ln\tfrac{3^{3/2}\sqrt{-a_{10}}}
{2^{3/2}7^{3/2}\sqrt[4]{15}}
+{\mathcal O}(t^{-7/8}\ln t),\quad
t\to+\infty.
\end{multline}

Finally, the coefficients $a_{10}$ and $a_{01}$
determining the asymptotic series for $a(t)$ 
(\ref{quasi_stat_series}),
\begin{equation*}
a(t)=
\tfrac{2\sqrt5}{9\sqrt3}t^{3/2}
+t^{-1/4}
\bigl(
a_{01}
+a_{10}\ln( t^{-7/4})
\bigr)
+\dots,
\end{equation*}
follow using the asymptotic formula (\ref{pole_distribution_as_fin}),
\begin{multline}\label{quasi_stat_from_pole}
a_{10}\ln( t^{-7/4})+a_{01}+\dots
=
\bigl(
-\tfrac{2\sqrt5}{9\sqrt3}
-\tfrac{2}{9}\xi_1
\bigr)t^{7/4}
+\tfrac{7}{12}
(
(n+\tfrac{1}{2})\omega_a
+(m+\tfrac{1}{2})\omega_b
)
=
\\
=-\tfrac{1}{6}a_{10}\ln( t^{-7/4})
-\tfrac{1}{6}a_{01}-\tfrac{7}{6}a_{10}
\\
+\tfrac{7}{12}
(n+\tfrac{1}{2})i\pi\tfrac{2\sqrt[4]{3}}{\sqrt{7}\sqrt[4]{5}}
+\tfrac{7}{12}
(m+\tfrac{1}{2})
\tfrac{\sqrt[4]{3}}{\sqrt[4]{5}\sqrt{7}}
\bigl[
\ln( t^{-7/4})
+\ln\tfrac{3^{3}a_{10}}
{2^{5}7^2\sqrt{15}}
\bigr]
+\dots
\end{multline}
Equating coefficients of $\ln( t^{-7/4})$, we find 
$a_{10}=a_{10}^{(m,n)}$,
\begin{equation}\label{a10_sol}
a_{10}^{(m,n)}=
(m+\tfrac{1}{2})
\tfrac{\sqrt[4]{3}}{2\sqrt[4]{5}\sqrt{7}},\quad
m\in{\mathbb Z}_+,
\end{equation}
while the constant terms yield the family of values of
$a_{01}=a_{01}^{(m,n)}$,
\begin{multline}\label{a01_sol}
a_{01}^{(m,n)}=
(m+\tfrac{1}{2})
\tfrac{\sqrt[4]{3}}{2\sqrt[4]{5}\sqrt{7}}
\ln\bigl[
(m+\tfrac{1}{2})
\tfrac{3^{11/4}}{2^{6}5^{3/4}7^{5/2}e}
\bigr]
+i\pi(n+\tfrac{1}{2})\tfrac{\sqrt[4]{3}}{\sqrt[4]{5}\sqrt{7}},
\\
m\in{\mathbb Z}_+,\quad
n\in{\mathbb Z}.
\end{multline}
The formulas (\ref{a10_sol}) and (\ref{a01_sol})
with (\ref{quasi_stat_series}) yield the asymptotic formula
(\ref{pole_dynamics_sol}) for $a^{(m,n)}(t)$ which implies that 
the poles of the special solution to \PItwo\ in a vicinity of
the attracting point $x_0^*=\tfrac{2\sqrt5}{9\sqrt3}$
form a regular lattice with the slowly modulated intervals
and a boundary formed by the line of poles corresponding
to the values $m=0$ and $n\in{\mathbb Z}$. In particular,
the interval between two the most right vertical lines of poles
is given by
\begin{equation*}
a^{(0,n)}(t)-a^{(1,n)}(t)=
-t^{-1/4}
\tfrac{\sqrt[4]{3}}{2\sqrt[4]{5}\sqrt{7}}
\ln\bigl[
 t^{-7/4}\tfrac{3^{17/4}}{2^{7}5^{3/4}7^{5/2}e}
\bigr]
+{\mathcal O}(t^{-2}\ln^2t),
\end{equation*}

Note that the boundary $\partial D_+$ formally
corresponds to $m=-\tfrac{1}{2}$, $n\in{\mathbb R}$,
and the distance between the first vertical line of poles
and $\partial D_+$ is 
\begin{equation*}
a^*(t)
-a^{(0,n)}(t)=
-t^{-1/4}
\tfrac{\sqrt[4]{3}}{4\sqrt[4]{5}\sqrt{7}}
\ln\bigl[
 t^{-7/4}
\tfrac{3^{11/4}}{2^{7}5^{3/4}7^{5/2}e}
\bigr]
+{\mathcal O}(t^{-2}\ln^2t).
\end{equation*}

\section{Problems and perspectives}

Above, we have presented and studied equation \PItwone, 
to our best knowledge, the first of the differential equations that controls
the isomonodromy deformations of a linear ODE with rational
coefficients and does not possess the Painlev\'e property. 
For the first glance, its existence contradicts the theorem by Miwa and
Malgrange. However, it is not the case. The absence of the Painlev\'e 
property in \PItwone\ is related to the fact that the domain of the solvability
of the corresponding RH problem in the 2-dimensional complex space with
the coordinates $(t,x)$ is restricted to the Malgrange divisor of \PItwo, 
i.e.\ to the set of the complex lines $(t,a(t))$ which are allowed to intersect. 
Actually, this fact provides us with the important information on the nontrivial 
analytic structure of the Malgrange divisor for \PItwo\ which forms a Riemann 
surface with the infinite number of sheets and third order branch points.

Besides this, the discovery of equation \PItwone\ provides
us with a new wide field of research. For instance, it is interesting 
to explore the possibility of 
existence of similar equations associated with other isomonodromic 
solutions of KdV or other integrable PDEs like Nonlinear Schr\"odinger 
or Pohlmeyer--Lund--Regge equations.

Other interesting problem not discussed above is the structure in 
\PItwone\ induced by the singularity reduction of the Hamiltonian 
structure in \PItwo\ (and, in the case of a successful extension 
of the singularity reduction methodology to the hierarchies 
associated with other Painlev\'e equations, the structures induced 
by the Weyl symmetries).

Finally, we mention the problem of characterization of such initial
data to \PItwo\ (and other isomonodromy deformation equations)
whose singularity reductions do not have branch points.
In the \PItwo\ case, we conjecture that its special solution
considered above does not have merging poles for any $t$
and therefore the relevant solution of \PItwone\ does not have
branch points. If this conjecture is true, it can serve as one
more characteristic property of this special solution.

\bigskip
{\bf Acknowledgments.} The authors are grateful to Prof. Y.~Ohyama for the reference to the paper \cite{shimomura}. This work is
partially supported by the European Research Council Advanced Grant FroM-PDE, by the Russian Federation Government Grant No. 2010-220-01-077 and by PRIN 2010-11 Grant ``Geometric and analytic theory of Hamiltonian systems in finite and infinite dimensions'' of Italian Ministry of Universities and Researches.
A.K. thanks the staff of SISSA for hospitality
during his visit when this work was done.

\bibliographystyle{plain}
\ifx\undefined\bysame
\newcommand{\bysame}{\leavevmode\hbox to3em{\hrulefill}\,}
\fi

\end{document}